\newcommand{\vol}{{\rm vol}\,}
\newcommand{\pa}{\partial}
\newtheorem{thm}{Theorem}[section]
\newtheorem{prop}{Proposition}[section]
\newtheorem{Def}[thm]{Definition}
\newtheorem{lem}[thm]{Lemma}
\newtheorem{conj}[thm]{Conjecture}
\newtheorem{remark}[thm]{Remark}
\newtheorem{corT}{Corollary}[section]
\newcommand{\Nat}{\mathbb{N}}
\newcommand{\eps}{\varepsilon}
\newcommand{\Hilbert}{\mathcal{H}}
\newcommand{\Dom}{\mathfrak{D}}
\def \<{\langle}
\def \>{\rangle}
\def \R{\mathbb R}
\def \H{{\cal H}}
\def \H^0{{\cal H}^0 or}
\def \p{\partial}
\def \n{\nabla}
\def \beq{\begin{equation}}
\def \eeq{\end{equation}}
\def \n{\nabla}
\def \eref{\eqref}
\begin{document}

\title{On the spectrum of the Laplacian}

\author{Nelia Charalambous}
\address{Department of Mathematics and Statistics, University of Cyprus, Nicosia, 1678, Cyprus, and
Deparment of Mathematics, Instituto Tecnol\'ogico Aut\'onomo de M\'exico, Mexico D.F. 01000, Mexico} \email[Nelia Charalambous]{nelia@ucy.ac.cy}

 \author{Zhiqin Lu} \address{Department of
Mathematics, University of California,
Irvine, Irvine, CA 92697, USA} \email[Zhiqin Lu]{zlu@uci.edu}

\thanks{
The first author was partially supported by CONACYT of Mexico and is thankful to the Asociaci\'on Mexicana de Cultura A.C. The second author is partially supported by the DMS-12-06748.}
 \date{September 1, 2012}

  \subjclass[2000]{Primary: 58J50;
Secondary: 58E30}

\keywords{essential spectrum, Weyl criterion, Sturm Theorem}

\begin{abstract} In this article we prove a generalization of Weyl's criterion for the essential spectrum of a self-adjoint operator on a Hilbert space. We then apply this criterion to the Laplacian on functions over open manifolds and  get new results for its essential spectrum.
\end{abstract}

\maketitle

\section{Introduction}
Let $M$ be a complete noncompact Riemannian manifold  of dimension $n$ and denote by $\Delta$ the Laplacian acting on $\mathcal C_0^\infty(M)$. It is well known that the self-adjoint extension of $\Delta$ on $L^2(M)$ exists and is a unique nonpositive definite and densely defined linear operator.  We will also use $\Delta$ to denote this extension for the remaining paper.

The spectrum of $-\Delta$,  $\sigma(-\Delta)$,  consists of all points $\lambda\in \mathbb{C}$ for which $\Delta+\lambda I$ fails to be invertible. Since $-\Delta$ is nonnegative definite, its  $L^2$-spectrum  is contained in $[0,\infty)$. The  essential spectrum of  $-\Delta$, $\sigma_\textup{ess}(-\Delta)$, consists of the cluster points in the spectrum and of isolated eigenvalues of  infinite multiplicity.  The following result is due to Donnelly~\cite{Don2}: if there exists an infinite dimensional subspace $G$ in the domain of $\Delta$  such that
\begin{equation}\label{sdf}
\|\Delta u+\lambda u\|_{L^2}\leq\sigma\|u\|_{L^2}
\end{equation}
for all $u\in G$, then
\[
\sigma_\mathrm{ess}(-\Delta)\cap(\lambda-\sigma,\lambda+\sigma)\neq\emptyset.
\]
The functions $u$ are referred to as the approximate eigenfunctions corresponding to the eigenvalue $\lambda$. The above criterion is simple to apply and has directed the study of the essential spectrum of the Laplacian  for the last three decades. A related result of the above is as follows: let $u$ be a nonzero smooth function with compact support. If
~\eqref{sdf} is satisfied, then
\[
\sigma(-\Delta)\cap(\lambda-\sigma,\lambda+\sigma)\neq\emptyset.
\]

We remark that  for the above criteria to be valid,  we do not have to assume the completeness of the manifold $M$.
They can  be applied  {to} closed manifolds or open manifolds with boundary (with either Dirichlet or Neumann boundary conditions)  as well as complete noncompact manifolds.   If $M$ is compact, the criterion gives the eigenvalue estimates. \\

In most problems the ideal space to work on is the $L^2$ function space when compared to the $L^q$ spaces. However, this is not the case when considering the spectrum of the Laplacian. On a Riemannian manifold, most of the approximate eigenfunctions we can write out explicitly must be related to the distance function. It is well known however, that the Laplacian of the distance  function is locally bounded in $L^1$, but not in $L^2$.

We can see this in the following simple example.
Take $M=S^1\times (-\infty,\infty)$, letting $(\theta,x)$ be the coordinates. Then the {radial}  function with respect to the point $(0,0)$ is given by
\[
r=\sqrt{x^2+(\min (\theta, 2\pi-\theta))^2}.
\]
A straightforward computation gives
\[
\Delta r=-\frac{2\pi}{\sqrt{x^2+\pi^2}}\delta_{\{\theta=\pi\}}+\text{ a smooth function},
\]
where $\delta_{\{\theta=\pi\}}$ is the Delta function along the submanifold $\{\theta=\pi\}$. Therefore $\Delta r$ is not locally $L^2$.

The failure of the $L^2$ integrability of the Laplacian of the distance function was one of the main difficulties in applying the classical criterion above. In fact, it was not possible to prove that the $L^2$ essential spectrum of the Laplacian on a manifold with nonnegative Ricci curvature is $[0,\infty)$ by directly computing the $L^2$ spectrum.   Additional  assumptions on the curvature and geometry of the manifold were necessary (see for example~\cites{Z,jli,C-L,donnelly-1, Esc86, EF92}).

Donnelly~\cite{donnelly-1} proved that the essential spectrum of the Laplacian is $[0,\infty)$ for manifolds of nonnegative Ricci curvature  and maximal volume growth.
J-P. Wang~\cites{Wang97}, by using the seminal theorem of K. T. Sturm~\cite{sturm}, removed the maximal volume growth condition.
Wang's result confirms the conjecture that the spectrum of manifolds with nonnegative Ricci curvature is $[0,\infty)$.
In~\cite{Lu-Zhou_2011},  Lu-Zhou gave a technical generalization of Wang's result which includes the case of  manifold of finite volume.

In this article, we introduce a new method  for  computing the spectrum of a self-adjoint operator on a Hilbert space  (see Theorem \ref{Thm.Weyl.bis-2}) which  has the following application in  the case of the Laplacian
\begin{thm}\label{thm00}
Let $M$ be a Riemannian manifold and let $\Delta$ be the Laplacian. Assume that
for $\lambda\in \R^+$, there exists a nonzero function $u$ in the domain of $\Delta$ such that
\begin{equation}\label{mnb}
\|u\|_{L^\infty}\cdot\|\Delta u+\lambda u\|_{L^1}\leq\sigma\|u\|_{L^2}^2
\end{equation}
for some positive number $\sigma>0$. Then
\[
\sigma (-\Delta)\cap (\lambda-\eps,\lambda+\eps)\neq\emptyset,
\]
where
\[
\eps=\min (1,(\lambda+1)\sigma^{1/3}).
\]
Moreover,
\[
\sigma_{\textup{ess}} (-\Delta)\cap (\lambda-\eps,\lambda+\eps)\neq\emptyset,
\]
 if
for any compact subset $K$  of $M$, there exists a nonzero function  $u$  in the domain of $\Delta$  satisfying ~\eqref{mnb} whose support is outside $K$.
\end{thm}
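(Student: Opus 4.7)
The plan is to reduce Theorem \ref{thm00} to the abstract Weyl-type criterion already established as Theorem \ref{Thm.Weyl.bis-2}, so the main task is to recognize the hypothesis \eqref{mnb} as the right input to that machine. My first step is to notice that, by H\"older's inequality, \eqref{mnb} implies
\begin{equation*}
|\langle (\Delta + \lambda)u, u\rangle| \leq \|u\|_{L^\infty}\|(\Delta+\lambda)u\|_{L^1} \leq \sigma\|u\|_{L^2}^2,
\end{equation*}
but more is actually encoded in the product $\|u\|_{L^\infty}\|(\Delta+\lambda)u\|_{L^1}$: it bounds the duality pairing of $(\Delta+\lambda)u\in L^1$ with every bounded function, not just with $u$. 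I would therefore set $A=-\Delta$ on $\mathcal H=L^2(M)$ and use $L^\infty(M)$ (with predual $L^1(M)$) as the Banach test space in the abstract theorem, so that \eqref{mnb} is exactly the quantitative hypothesis needed.

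Once Theorem \ref{Thm.Weyl.bis-2} is invoked it yields a point of $\sigma(-\Delta)$ within some explicit distance of $\lambda$; the task is to track that distance through the reduction and recover $\eps=(\lambda+1)\sigma^{1/3}$. I expect the exponent $1/3$ to arise from an interpolation inside the abstract theorem: the first-moment spectral information $|\int (\mu-\lambda)\,d\langle E_\mu u, u\rangle|\leq \sigma\|u\|_{L^2}^2$ combines with the zeroth-moment identity to produce a second-moment bound of order $\sigma^{2/3}\|u\|_{L^2}^2$ after optimizing a truncation parameter that splits the spectral mass of $u$ into a piece near $\lambda$ and a distant tail. Since $\int (\mu-\lambda)^2\,d\langle E_\mu u, u\rangle\geq \eps^2\|u\|_{L^2}^2$ whenever $\sigma(-\Delta)\cap(\lambda-\eps,\lambda+\eps)=\emptyset$, a second-moment bound of that size immediately forces $\eps\lesssim\sigma^{1/3}$. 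The factor $(\lambda+1)$ and the cutoff at $1$ in the statement absorb the $\lambda$-dependence of the identity $\langle(-\Delta-\lambda)u,u\rangle=\int|\nabla u|^2-\lambda\|u\|_{L^2}^2$ and the fact that the interpolation is informative only in the small-$\sigma$ regime.

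For the essential spectrum statement I would manufacture a sequence of approximate eigenfunctions with disjoint supports. Given an exhaustion $K_1\subset K_2\subset\cdots$ of $M$ by compacts, I inductively invoke the hypothesis with $K_k$ enlarged to contain the supports of $u_1,\dots,u_{k-1}$, producing $u_k$ supported outside $K_k$ and still satisfying \eqref{mnb}. The resulting $u_k$ are pairwise orthogonal in $L^2$, and the quantitative version of the first part of the theorem, which yields not just the existence of a spectral value near $\lambda$ but also a lower bound on the spectral mass of each $u_k$ inside $(\lambda-\eps,\lambda+\eps)$, produces infinitely many linearly independent vectors in the spectral subspace for that interval. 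This rules out a purely discrete finite-multiplicity spectrum there, hence $(\lambda-\eps,\lambda+\eps)\cap\sigma_{\mathrm{ess}}(-\Delta)\neq\emptyset$.

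The hardest step will be extracting the precise exponent $\sigma^{1/3}$; this is not a standard Weyl output and must come from a carefully optimized truncation argument internal to Theorem \ref{Thm.Weyl.bis-2}. By contrast, once that abstract machine is in place, translating between its hypotheses and \eqref{mnb} via H\"older's inequality, as well as constructing orthogonal approximate eigenvectors with disjoint supports for the essential spectrum part, are routine.
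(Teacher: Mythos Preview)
Your plan has a genuine gap: you never identify the ingredient that verifies condition~(2) of Theorem~\ref{Thm.Weyl.bis-2}. H\"older gives you condition~(3), namely $|(u,(-\Delta-\lambda)u)|\leq\sigma\|u\|_{L^2}^2$, but that is only first-moment control $\bigl|\int(\mu-\lambda)\,d\|E_\mu u\|^2\bigr|\leq\sigma\|u\|_{L^2}^2$, and your proposed ``interpolation'' to a second-moment bound of order $\sigma^{2/3}$ is false: take $u$ with spectral mass split equally at $\lambda-M$ and $\lambda+M$, so the first moment vanishes while the second moment is $M^2$. No amount of truncation-and-optimize recovers a second-moment bound from the first moment alone.

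What the paper actually uses is Lemma~\ref{lem42}: the resolvent $(-\Delta+1)^{-1}$ is bounded on $L^\infty(M)$ with norm at most $1$ (a consequence of the heat semigroup being $L^\infty$-contractive, via Kato's inequality). This is precisely the ``bounded test function'' you sensed was needed but did not name. Writing $(-\Delta+1)^{-1}(-\Delta-\lambda)=I-(\lambda+1)(-\Delta+1)^{-1}$, one gets $\|(-\Delta+1)^{-1}(-\Delta-\lambda)u\|_{L^\infty}\leq C(\lambda)\|u\|_{L^\infty}$, and hence by H\"older
\[
\bigl((-\Delta+1)^{-1}(-\Delta-\lambda)u,\,(-\Delta-\lambda)u\bigr)\leq C(\lambda)\,\|u\|_{L^\infty}\|(-\Delta-\lambda)u\|_{L^1}\leq C(\lambda)\,\sigma\|u\|_{L^2}^2,
\]
which is exactly condition~(2) with $f(t)=(t+1)^{-1}$. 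The exponent $1/3$ then falls out of the quantitative spectral splitting $u=u_1+u_2$ of~\eqref{fgh}: the bound above gives $\|u_1\|^2\lesssim(\lambda+1)\eps^{-2}\sigma\|u\|^2$ (since $f(t)(t-\lambda)^2\gtrsim\eps^2/(\lambda+1)$ on $[0,\lambda-\eps]$), while condition~(3) gives $\eps\|u_2\|^2-\lambda\|u_1\|^2\leq\sigma\|u\|^2$; combining yields $\eps\leq C(\lambda)\eps^{-2}\sigma$, i.e.\ $\eps^3\lesssim(\lambda+1)^3\sigma$. Your essential-spectrum argument via disjoint supports is fine, but the quantitative spectral-mass lower bound you invoke there also rests on the same two inequalities, so it too requires Lemma~\ref{lem42}.
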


We expect the above result  to have many applications in spectrum theory (for example, on manifolds with corners, where the test functions are usually not smooth). In this paper, we concentrate on applying the criterion  to the computation of  the essential spectrum of complete noncompact manifolds.

Theorem \ref{thm00}   proves to be a powerful tool in expanding the set of manifolds for which the $L^2$ essential spectrum is the nonnegative real line.
In the case of shrinking Ricci solitons,  we are able to prove the following result.

\begin{thm}\label{ricci-solition}
The $L^2$ essential spectrum of a complete shrinking Ricci soliton is $[0,\infty)$.
\end{thm}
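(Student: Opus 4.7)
The plan is to apply Theorem~\ref{thm00} for every $\lambda\ge 0$, using test functions built from the soliton potential. Since $-\Delta\ge 0$, only the inclusion $[0,\infty)\subset\sigma_{\textup{ess}}(-\Delta)$ needs to be proved; by the second half of Theorem~\ref{thm00} it suffices, given $\lambda\ge 0$, $\sigma>0$ and a compact set $K$, to exhibit a smooth compactly supported $u$ with $\supp(u)\cap K=\emptyset$ satisfying $\|u\|_{L^\infty}\|\Delta u+\lambda u\|_{L^1}\le \sigma\|u\|_{L^2}^2$. Normalizing the soliton so that $\textup{Ric}+\nabla^2 f=\tfrac12 g$, one has Hamilton's identities $R+|\nabla f|^2=f$ and $\Delta f=\tfrac n2-R$ (with $R\ge 0$). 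By Cao--Zhou the potential is proper with $f\sim \tfrac14 d(x,x_0)^2$, and the volume satisfies the polynomial bound $\vol B_r(x_0)\le Cr^n$. Setting $s=2\sqrt{f}$ on $\{f>0\}$, the soliton identities give $|\nabla s|^2=1-R/f$ and $\Delta s=O(1/s)$ at infinity, so $s$ behaves like a smooth asymptotic distance function with $|\nabla s|\to 1$ and $R/f\to 0$.

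For $\lambda>0$ I would take $u_a=\phi(s)\cos(\sqrt{\lambda}(s-a))$, where $\phi$ is a smooth cutoff supported on the annulus $\{a\le s\le 2a\}$ (taken far enough out to avoid $K$), identically $1$ on the bulk $\{a+1\le s\le 2a-1\}$, with $|\phi'|,|\phi''|\le C$; for $\lambda=0$ I would drop the cosine. A direct computation groups $\Delta u_a+\lambda u_a$ into (i) a bulk factor $\lambda(R/f)\phi\cos - \sqrt{\lambda}\phi(\Delta s)\sin$ that is pointwise $o(1)$ as $a\to\infty$ by the soliton asymptotics, and (ii) transition terms involving $\phi'$ and $\phi''$ supported on $\{a\le s\le a+1\}\cup\{2a-1\le s\le 2a\}$. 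Integrating and using $\|u_a\|_{L^\infty}=1$ together with the oscillation averaging $\|u_a\|_{L^2}^2\gtrsim \vol\{a+1\le s\le 2a-1\}$, one obtains
\[
\frac{\|\Delta u_a+\lambda u_a\|_{L^1}}{\|u_a\|_{L^2}^2}\le o_{a\to\infty}(1)+C(\lambda)\,\frac{\vol\{\text{transition}\}}{\vol\{\text{bulk}\}}.
\]

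To close the argument, the transition-to-bulk volume ratio must tend to zero. This is the main technical hurdle, because a shrinking soliton has no uniform lower Ricci bound, so Bishop--Gromov is unavailable and the polynomial upper bound of Cao--Zhou cannot by itself control this ratio. I would handle it by combining the Cao--Zhou polynomial upper bound with the known linear (or better) lower volume bound for shrinking solitons (Munteanu--Wang, Haslhofer--M\"uller), and, if necessary, by spreading $\phi$ over a longer transition scale $L\to\infty$ so that $|\phi'|,|\phi''|\le C/L$, which buys an extra factor of $1/L$ in the transition contribution. Once this ratio vanishes, the hypothesis of Theorem~\ref{thm00} is satisfied with $\sigma\to 0$, giving $\lambda\in\sigma_{\textup{ess}}(-\Delta)$; since $\lambda\in[0,\infty)$ was arbitrary, $\sigma_{\textup{ess}}(-\Delta)=[0,\infty)$.
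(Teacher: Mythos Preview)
Your overall architecture matches the paper's: use $s=\rho=2\sqrt f$ as a smooth surrogate distance, build $\phi=\chi\cdot e^{i\sqrt\lambda\,\rho}$ (your cosine is just the real part), and feed the resulting $L^1$--$L^\infty$ estimate into Theorem~\ref{thm00}. There is, however, a genuine gap in your bulk estimate.

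You assert that the bulk term is \emph{pointwise} $o(1)$ because ``$R/f\to 0$'' and ``$\Delta s=O(1/s)$'' at infinity. Neither of these is known for a general complete shrinking soliton with no curvature hypothesis. From $R+|\nabla f|^2=f$ one only gets $0\le R\le f$, and the computation
\[
\Delta\rho=f^{-1/2}\Bigl(\tfrac{n-1}{2}-R\bigl(1-\tfrac{1}{2f}\bigr)\Bigr)
\]
shows that $\Delta\rho$ is pointwise small only if $R=o(\sqrt f)$; requiring this is exactly the sub-quadratic scalar curvature condition that Lu--Zhou needed and that this theorem is meant to remove. What survives without any pointwise control is the \emph{integrated} estimate: using $R=\tfrac n2-\Delta f$ together with Stokes' theorem on the sublevel annuli $\{a\le\rho\le b\}$ and the bound $|\nabla f|\le\sqrt f$ on $\partial D(r)$, one obtains
\[
\int_{D(b)\setminus D(a)}\bigl(|\Delta\rho|+\bigl||\nabla\rho|^2-1\bigr|\bigr)\le \eps\,V(b)
\]
for $a$ (and then $b$) large; this is precisely the estimate the paper imports from \cite{Lu-Zhou_2011}*{p.~3289}. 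Once you replace the pointwise claim by this $L^1$ average, your argument goes through.

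A second, smaller point: you treat the transition-to-bulk volume ratio as the main obstacle and reach for a linear lower volume bound (Munteanu--Wang). That is unnecessary. With only the Cao--Zhou polynomial upper bound, a pigeonhole argument (as in Lemma~\ref{lem41}) produces a sequence $y_k\to\infty$ with $V(y_k+R+1)\le 2V(y_k)$: otherwise $V(y+k(R+1))>2^kV(y)$ would violate polynomial growth. This already makes the outer volume comparable to the bulk, so the spreading trick you mention (which is indeed the paper's choice of scale $l$) together with this doubling step closes the estimate without any lower volume input.
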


Note that no curvature assumption is needed here.

For a large class of manifolds (for example, the shrinking Ricci solitons), we are able  to control the volume growth about a fixed point,  but it is difficult to prove  the uniform volume growth. Without the uniform volume growth property,  the  theorem of Sturm does not apply and the results of
 Wang~\cite{Wang97} or Lu-Zhou~\cite{Lu-Zhou_2011} cannot be used.
Therefore, the following result may be practically useful:
\begin{thm}\label{thm11}
Let $M$ be a complete noncompact Riemannian manifold. Suppose that, with respect to a fixed point $p$, the radial Ricci curvature is asymptotically nonnegative (see Lemma~\ref{DeltarEstim}), and if the volume of the manifold is finite we additionally assume that its volume  does not decay exponentially at $p$.  Then the $L^2$ spectrum of the Laplace operator on functions is $[0,\infty)$.
\end{thm}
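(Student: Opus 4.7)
Since $-\Delta$ is nonnegative self-adjoint, $\sigma(-\Delta)\subseteq[0,\infty)$, so it suffices to show that every $\lambda\ge 0$ lies in $\sigma_{\mathrm{ess}}(-\Delta)$. By Theorem~\ref{thm00}, this reduces to the following task: for every $\lambda\ge 0$, every $\delta>0$, and every compact $K\subset M$, exhibit a nonzero $u$ in the domain of $\Delta$, supported outside $K$, with
\[
\|u\|_{L^\infty}\,\|\Delta u+\lambda u\|_{L^1}\le \delta\,\|u\|_{L^2}^2.
\]
The plan is to take $u=\phi(r)\cos(\sqrt{\lambda}\,r)$ (with $u=\phi(r)$ when $\lambda=0$), where $r$ is the distance from the fixed basepoint $p$ and $\phi$ is a smooth cutoff with $0\le\phi\le 1$ supported in a thick annulus $\{R\le r\le R+L\}$ far from $p$, identically $1$ on $[R+1,R+L-1]$, and satisfying $|\phi'|,|\phi''|\le C$.

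On the regular set of $r$ a direct computation gives
\[
\Delta u+\lambda u=\phi''\cos(\sqrt{\lambda}\,r)-2\sqrt{\lambda}\,\phi'\sin(\sqrt{\lambda}\,r)+\bigl(\phi'\cos(\sqrt{\lambda}\,r)-\sqrt{\lambda}\,\phi\sin(\sqrt{\lambda}\,r)\bigr)\Delta r,
\]
and across the cut locus $u$ remains Lipschitz while $\Delta u$ acquires a nonpositive singular part inherited from $\Delta r$. This is precisely the situation the $L^1$ criterion of Theorem~\ref{thm00} is designed for: although $\Delta r$ fails to be locally $L^2$, it is locally $L^1$, and Lemma~\ref{DeltarEstim} (Laplacian comparison under asymptotically nonnegative radial Ricci) furnishes the quantitative estimate $\Delta r\le (n-1)/r+o(1/r)$ on the regular set, interpreted globally in the barrier sense. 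Combining the upper bound on $(\Delta r)^+$ with the divergence identity $\int_{\{R\le r\le R'\}}\Delta r\,dV=\mathrm{Area}(\{r=R'\})-\mathrm{Area}(\{r=R\})$ also controls $\int|\Delta r|\,dV$ on annuli.

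Writing $V(a,b)=\mathrm{vol}(\{a\le r\le b\})$ and collecting the bounds yields
\[
\|\Delta u+\lambda u\|_{L^1}\le C_1\bigl(V(R,R+1)+V(R+L-1,R+L)\bigr)+\frac{C_2(1+\sqrt{\lambda})}{R}\,V(R,R+L),
\]
while $\|u\|_{L^\infty}\le 1$ and $\|u\|_{L^2}^2\ge c_\lambda\,V(R+1,R+L-1)$ (using that $\cos^2$ averages to $1/2$ over long intervals when $\lambda>0$, and is identically $1$ when $\lambda=0$). Hence the quantity controlling $\sigma$ is dominated by a term of order $(1+\sqrt{\lambda})/R$ plus a boundary correction, both multiplied by the annular volume ratio $V(R,R+L)/V(R+1,R+L-1)$.

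It remains to choose $(R,L)$ with $R\to\infty$ (placing the support outside any prescribed $K$) so that this bound tends to $0$. When $\mathrm{vol}(M)=\infty$, the asymptotic nonnegativity of radial Ricci forces at most polynomial volume growth through a Bishop--Gromov-type comparison, so taking e.g.\ $L=\sqrt{R}$ makes the two boundary annular volumes negligible compared to the bulk. The main obstacle, and the reason the uniform-volume-growth hypothesis of Sturm/Wang cannot be used, is the finite-volume case: the annular densities may collapse rapidly, and one must use the hypothesis that $V(r)$ does not decay exponentially at $p$. That assumption should produce a sequence $R_k\to\infty$ and widths $L_k$ along which $V(R_k,R_k+L_k)/V(R_k+1,R_k+L_k-1)$ stays bounded; inserted into the displayed estimate this drives $\|\Delta u+\lambda u\|_{L^1}/\|u\|_{L^2}^2$ to $0$, and Theorem~\ref{thm00} then yields $\lambda\in\sigma_{\mathrm{ess}}(-\Delta)$, completing the proof.
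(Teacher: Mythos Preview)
Your proposal has the right overall shape and correctly identifies Theorem~\ref{thm00} as the tool, but there is a genuine gap at the very first step: your test function $u=\phi(r)\cos(\sqrt{\lambda}\,r)$ is \emph{not} in the domain of $\Delta$. The hypothesis of Theorem~\ref{thm00} explicitly requires $u\in\Dom(\Delta)$, i.e.\ $\Delta u\in L^2$, and as you yourself note, $\Delta r$ acquires a singular part along the cut locus that is locally $L^1$ but not $L^2$. The $L^1$ criterion does not waive the domain requirement; it only replaces the $L^2$ norm of $(\Delta+\lambda)u$ by its $L^1$ norm \emph{once $u$ is known to lie in the domain}. The paper addresses exactly this point via Proposition~\ref{pp-1}, which produces a smooth approximation $\tilde r$ of $r$ with $|\tilde r-r|$ and $\|\nabla\tilde r-\nabla r\|_{L^1}$ small and with $\Delta\tilde r$ controlled pointwise by $\max_{B_x(1)}\Delta r$ up to an $L^1$-small error $b$. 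The test functions are then $\chi(\tilde r/R)\,e^{i\sqrt{\lambda}\,\tilde r}$, which are genuinely smooth and compactly supported, hence in $\Dom(\Delta)$; the $L^1$ estimate on $\Delta\tilde r$ (Lemma~\ref{DeltarVolEst}) is what feeds into Theorem~\ref{thm00}.

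A second gap lies in the volume arguments. Asymptotic nonnegativity of the radial Ricci curvature gives only \emph{subexponential} volume growth (Lemma~\ref{LemSubexp}), not polynomial growth, so your choice $L=\sqrt{R}$ does not by itself make the boundary annuli negligible compared with the bulk. The paper instead argues by contradiction: subexponential growth forces the existence of radii $y_k\to\infty$ with $V(y_k+R+1)\le 2V(y_k)$, and the finite-volume case is handled by a parallel iteration argument pitted against the non-exponential-decay hypothesis (Lemma~\ref{lem41}). Your sentence ``that assumption should produce a sequence\dots'' is precisely where this work must be done. Finally, note that using $e^{i\sqrt{\lambda}\,\tilde r}$ rather than $\cos(\sqrt{\lambda}\,r)$ makes $|\phi|=\chi$ and yields $\|\phi\|_{L^2}^2\ge V(y)-V(x)$ directly, avoiding the $\cos^2$-averaging lower bound you would otherwise have to justify against an a priori uncontrolled area function.
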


We shall  also use Theorem~\ref{thm00}  to modify a result of Elworthy-Wang~\cite{ElwW04} on manifolds that posses an exhaustion function (Theorem~\ref{72}).
We replace the $L^2$ norm assumption by an $L^1$ norm assumption. \\

In the last section, we show  that
it is possible to work with continuous test functions in Theorem~\ref{thm00}. By using them instead we avoid the repetitive choosing of cut-off functions.\\

The essential spectrum of the Laplacian on noncompact Riemannian manifolds is interesting and important as it reveals a lot of information about the geometry  of the manifold.
Although there are lot of interesting open problems in this direction, the authors believe that answering the following conjecture is the most  important one.

\begin{conj}Let $M$ be a complete  noncompact Riemannian manifold with Ricci curvature bounded below. Then the $L^2$ essential spectrum of the Laplacian on functions is a connected subset of the real line. In other words, the essential spectrum set is of the form $[a,\infty)$, where $a$ is a nonnegative real number.\end{conj}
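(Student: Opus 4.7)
The natural strategy is a phase-modulation (WKB-type) argument powered by the new $L^1$ criterion of Theorem~\ref{thm00}. Set $a = \inf \sigma_{\textup{ess}}(-\Delta)$; if $a = +\infty$ the conjecture is vacuous, so assume $a < \infty$, in which case $a \in \sigma_{\textup{ess}}(-\Delta)$ automatically (the essential spectrum being closed). Fix $\lambda > a$ and an arbitrary compact set $K\subset M$. The goal is to produce a nonzero $v$ in the domain of $\Delta$, supported outside $K$, satisfying
\begin{equation*}
\|v\|_{L^\infty}\cdot \|\Delta v + \lambda v\|_{L^1} \le \sigma\, \|v\|_{L^2}^2
\end{equation*}
with $\sigma$ arbitrarily small; Theorem~\ref{thm00} will then force $\lambda \in \sigma_{\textup{ess}}(-\Delta)$.

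I would start from an approximate eigenfunction $u$ for $a$ that is supported outside $K$ in a thin annulus $\{R \le r \le R+W\}$, where $r$ is the distance to a fixed basepoint $p$, and set $v := e^{i\tau r} u$ with $\tau = \sqrt{\lambda - a}$. Since $|\nabla r|=1$ almost everywhere, a direct computation gives
\begin{equation*}
\Delta v + \lambda v \;=\; e^{i\tau r}\bigl[\Delta u + a u \,+\, i\tau (\Delta r)\,u \,+\, 2 i\tau\,\langle\nabla r,\nabla u\rangle\bigr],
\end{equation*}
and $\|v\|_{L^p}=\|u\|_{L^p}$ for every $p$. The three non-leading terms are what must be bounded in $L^1$. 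The term $\|\Delta u + a u\|_{L^1}$ is controlled by Cauchy--Schwarz at cost of a factor $V^{1/2}$, where $V=\mathrm{vol}(\mathrm{supp}\,u)$. The term $i\tau(\Delta r)u$ is precisely the one that has historically obstructed $L^2$ approaches but is tailor-made for the $L^1$ criterion: by the Laplacian comparison theorem, Ricci $\ge -(n-1)K_0$ yields $\Delta r \le (n-1)\sqrt{K_0}\,\coth(\sqrt{K_0}\,r)$ in the sense of distributions, bounded for $r$ large, and the singular negative part on the cut locus has total mass on any annulus estimated via the divergence theorem together with Bishop--Gromov. The cross term $\langle\nabla r,\nabla u\rangle$ is dominated in $L^1$ by $V^{1/2}\|\nabla u\|_{L^2}$, and integration by parts gives $\|\nabla u\|_{L^2}^2 \to a \|u\|_{L^2}^2$, so this is bounded.

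The central obstacle is the joint control of $V$, $\|u\|_{L^\infty}$, and the residue $\|\Delta u + a u\|_{L^2}$ for an approximate eigenfunction spatially concentrated in a thin annulus at infinity. Ricci bounded below provides Bishop--Gromov volume control on annuli of any fixed radial width, and a standard Moser iteration on unit balls (valid with only a Ricci lower bound) bounds $\|u\|_{L^\infty}$ in terms of $\|u\|_{L^2(\tilde\Omega)} + \|\Delta u\|_{L^2(\tilde\Omega)}$ on a slightly enlarged annulus $\tilde\Omega$. The hard part is producing an approximate eigenfunction for $a$ that actually concentrates in a thin annulus near infinity: the abstract Weyl criterion produces spectrally localized functions but grants no a priori \emph{spatial} localization, and the naive remedy of multiplying a given approximate eigenfunction by a radial cut-off $\chi_{[R,R+W]}(r)$ introduces commutator terms
\begin{equation*}
[\Delta,\chi]\,u \;=\; 2\chi'(r)\,\langle\nabla r,\nabla u\rangle \,+\, \bigl(\chi''(r) + \chi'(r)\,\Delta r\bigr)\,u,
\end{equation*}
whose $L^2$ norm need not be small. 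Overcoming this would likely require a concentration--compactness argument adapted to the Ricci-lower-bound setting, exploiting finite-dimensionality of spectral projections below $a + \varepsilon$; this, rather than the WKB modulation itself, is where the genuine new idea appears to be needed.
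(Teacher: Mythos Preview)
The statement you are attempting to prove is labelled a \emph{Conjecture} in the paper and is not proved there. The authors explicitly write that they ``answer the above conjecture in some special cases'' and that they ``believe that the analysis of the wave kernel is needed to answer the conjecture in full.'' All of the paper's results (Theorems~\ref{thm11}, \ref{thm12}, \ref{ricci-solition}, \ref{72}) establish $\sigma_{\textup{ess}}(-\Delta)=[0,\infty)$ under hypotheses that force $\Delta r$ (or $\Delta\gamma$ for an exhaustion $\gamma$) to be asymptotically small in an integrated $L^1$ sense; none of them treats a general Ricci lower bound, and there is no proof against which to compare your proposal.

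Your own acknowledged gap---producing approximate eigenfunctions for $a=\inf\sigma_{\textup{ess}}(-\Delta)$ that are spatially localized in thin annuli---is genuine and, as you say, is where a new idea is needed. But there is a second and more immediate obstruction you have underestimated. With only ${\rm Ric}\ge -(n-1)K_0$, Laplacian comparison gives $\Delta r\le (n-1)\sqrt{K_0}\coth(\sqrt{K_0}\,r)$, which is \emph{bounded} but not \emph{small} at infinity (it tends to $(n-1)\sqrt{K_0}$, and this is sharp on hyperbolic space). Hence $\|(\Delta r)\,u\|_{L^1}$ is of order $\|u\|_{L^1}$, and after multiplying by $\|u\|_{L^\infty}$ and dividing by $\|u\|_{L^2}^2$ you obtain a quantity bounded below by a fixed positive constant (since $\|u\|_{L^\infty}\|u\|_{L^1}\ge\|u\|_{L^2}^2$). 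The same issue afflicts the cross term: $\|\langle\nabla r,\nabla u\rangle\|_{L^1}\le V^{1/2}\|\nabla u\|_{L^2}\approx\sqrt{a}\,V^{1/2}\|u\|_{L^2}$, and the resulting ratio is bounded below by $2\tau\sqrt{a}$. In the hyperbolic model these two terms cancel exactly because the bottom-of-spectrum eigenfunction has the specific form $e^{-(n-1)r/2}$, but for an abstract Weyl sequence you have no such structural information. Thus even granting your spatial localization, the $L^1$ criterion of Theorem~\ref{thm00} does not close without an additional mechanism producing cancellation between $i\tau(\Delta r)u$ and $2i\tau\langle\nabla r,\nabla u\rangle$; this is precisely why the paper's positive results all impose conditions making $\Delta r$ (or its analogue) asymptotically negligible.
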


As is well-known, the essential spectrum of a Schr\" odinger operator could be very complicated (cf.~\cite{rs4}*{Chapter XIII}) and it certainly need not be a connected set. For the case of the Laplacian on a complete manifold however, in all known examples the $L^2$ essential spectrum is a connected set. In this paper, we  answer the above conjecture  in some special cases. We believe that the analysis of the wave kernel is needed to answer the conjecture in full.\\

{\bf Acknowledgement.} The authors thank Rafe Mezzeo and Jiaping Wang for their interest in and discussions of the essential spectrum problem. They particularly thank David Krej\v{c}i\v{r}{\'\i}k for  the discussion on the alternative versions of  Weyl's Criterion which led  to the proof of Theorem~\ref{Thm.Weyl.bis-2}.

\section{The Weyl Criterion for Quadratic Forms}
%
Let~$H$ be a self-adjoint operator on a Hilbert space~$\Hilbert$.
The norm and inner product in~$\Hilbert$ are respectively
denoted by~$\|\cdot\|$ and $(\cdot,\cdot)$. Let $\sigma(H), \sigma_\mathrm{ess}(H)$ be the spectrum and the essential spectrum of $H$, respectively. Let $\Dom(H)$ be the domain of $H$.
The Classical Weyl criterion states that
\begin{thm}[Classical Weyl's criterion]\label{Thm.Weyl}
A nonnegative real number~$\lambda$ belongs to $\sigma(H)$ if, and only if,
there exists a sequence $\{\psi_n\}_{n \in \Nat} \subset \Dom(H)$
such that
\begin{enumerate}
\item
$
  \forall n\in\Nat, \quad
  \|\psi_n\|=1
$\,,
\item
$
  (H-\lambda)\psi_n \to 0, \text{ as } n\to\infty  \text{ in }\mathcal H.$
\end{enumerate}
Moreover, $\lambda$ belongs to $\sigma_\mathrm{ess}(H)$ of $H$ if, and only if,
in addition to the above properties
\begin{enumerate}
\setcounter{enumi}{2}
\item
$
  \psi_n \to  0 \text{ weakly as  } n\to\infty \text{ in }\mathcal H.
$
\end{enumerate}
\end{thm}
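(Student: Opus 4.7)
The plan is to prove the two characterizations separately, both via the spectral theorem for self-adjoint operators, which guarantees a projection-valued measure $E$ on $\R$ with $H=\int \mu\,dE(\mu)$.

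For the first assertion, I would start with the easier direction: if a Weyl sequence $\{\psi_n\}$ exists, then $\lambda\in\sigma(H)$. Indeed, if $\lambda$ were in the resolvent set, then $(H-\lambda)^{-1}$ would be bounded, and from $\|\psi_n\|=1$ one would get $1=\|(H-\lambda)^{-1}(H-\lambda)\psi_n\|\leq \|(H-\lambda)^{-1}\|\,\|(H-\lambda)\psi_n\|\to 0$, a contradiction. For the converse, suppose $\lambda\in\sigma(H)$. Using the spectral theorem, observe that for every $n\in\Nat$ the spectral projection $P_n:=E\bigl((\lambda-1/n,\lambda+1/n)\bigr)$ is nonzero (otherwise $\lambda$ would lie in the resolvent set). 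Choose any unit vector $\psi_n$ in the range of $P_n$; then $\psi_n\in\Dom(H)$ (since the spectral measure of $\psi_n$ is compactly supported) and
\[
\|(H-\lambda)\psi_n\|^2=\int_{\lambda-1/n}^{\lambda+1/n}(\mu-\lambda)^2\,d(E(\mu)\psi_n,\psi_n)\leq \frac{1}{n^2},
\]
which gives the required sequence.

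For the essential spectrum, I would use the standard reformulation that $\lambda\in\sigma_\mathrm{ess}(H)$ if, and only if, for every $\eps>0$ the projection $E((\lambda-\eps,\lambda+\eps))$ has infinite-dimensional range (otherwise $\lambda$ is either in the resolvent set or an isolated eigenvalue of finite multiplicity). Assuming $\lambda\in\sigma_\mathrm{ess}(H)$, I would refine the previous construction by choosing $\psi_n$ to be an orthonormal sequence: at step $n$, pick $\psi_n$ in the range of $P_n$ orthogonal to $\psi_1,\dots,\psi_{n-1}$, which is possible because $\mathrm{Ran}(P_n)$ is infinite-dimensional. Any orthonormal sequence converges weakly to zero (by Bessel's inequality), so condition (3) holds along with (1) and (2).

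The converse is the step I expect to require the most care: given a sequence satisfying (1), (2), (3), show $\lambda\in\sigma_\mathrm{ess}(H)$. The natural approach is by contradiction: if $\lambda\notin\sigma_\mathrm{ess}(H)$, then either $\lambda$ is in the resolvent set (ruled out by the first part of the theorem) or $\lambda$ is an isolated eigenvalue of finite multiplicity. In the latter case, let $P$ denote the finite-rank orthogonal projection onto $\ker(H-\lambda)$ and split $\psi_n=P\psi_n+(I-P)\psi_n$. Weak convergence of $\psi_n$ to zero combined with the finite rank (hence compactness) of $P$ forces $\|P\psi_n\|\to 0$. On the orthogonal complement, the spectral theorem gives a gap: there exists $c>0$ with $\|(H-\lambda)\varphi\|\geq c\|\varphi\|$ for all $\varphi\in\Dom(H)\cap\mathrm{Ran}(I-P)$. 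Combined with (2), this yields $(I-P)\psi_n\to 0$ strongly, contradicting $\|\psi_n\|=1$. The delicate point in this last step is producing the spectral gap cleanly using the isolation of $\lambda$, which is precisely where the hypothesis that $\lambda$ is isolated in $\sigma(H)$ enters.
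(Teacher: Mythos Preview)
Your argument is correct and is essentially the standard textbook proof of Weyl's criterion. Note, however, that the paper does not actually prove this theorem: it is stated as classical background (hence the name), and the paper's own contribution begins with the generalization in Theorem~\ref{Thm.Weyl.bis-2}. So there is no ``paper's proof'' to compare against here; your write-up would serve perfectly well as a self-contained justification of the classical statement.
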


\begin{remark} The above theorem is still true if the convergence in (2) is replaced by  weak convergence, the statement of which can be found (without proof) in \cite{DDi} and later in~\cite{KK}.
This version of the Weyl criterion was applied for the first time to the Laplacian on curved Euclidean domains in~\cite{KK}.
The authors are grateful to David Krej\v{c}i\v{r}{\'\i}k for informing them of  the results.
\end{remark}

We have the following  functional analytic result, which generalizes the weak Weyl criterion. To the authors' knowledge, this result seems to be new.

\begin{thm}\label{Thm.Weyl.bis-2}
Let $f$ be a  bounded positive continuous function over $[0,\infty)$.
A nonnegative real number  $\lambda$ belongs to the spectrum $\sigma(H)$ if, and only if,
there exists a sequence $\{\psi_n\}_{n \in \Nat} \subset \Dom(H)$
such that
\begin{enumerate}
\item
$
  \forall n\in\Nat, \quad
  \|\psi_n\|=1
$\,,
\item
$
 (f(H) (H-\lambda)\psi_n, (H-\lambda)\psi_n)\to 0, \text{ as } n\to\infty \quad {and}
$
\item
$
(\psi_n, (H-\lambda)\psi_n) \to  0, \text{ as } n\to\infty.
$

\end{enumerate}
Moreover, $\lambda$ belongs to $\sigma_\mathrm{ess}(H)$ of $H$ if, and only if,
in addition to the above properties
\begin{enumerate}
\setcounter{enumi}{3}
\item
$
  \psi_n \to 0, \text{ weakly as } n\to\infty
$
\text{ in } $\mathcal H$.
\end{enumerate}%
\end{thm}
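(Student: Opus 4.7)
The proof naturally splits into forward and reverse implications for both the spectrum and the essential spectrum, with the reverse direction being the substantial one.

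For the forward direction, I would invoke the classical Weyl criterion (Theorem~\ref{Thm.Weyl}): if $\lambda \in \sigma(H)$, there is a sequence $\{\psi_n\} \subset \Dom(H)$ with $\|\psi_n\|=1$ and $(H-\lambda)\psi_n \to 0$ strongly in $\mathcal{H}$. Since $f$ is bounded, $f(H)$ is a bounded operator, and (2) follows from
\[
(f(H)(H-\lambda)\psi_n, (H-\lambda)\psi_n) \leq \|f\|_\infty \, \|(H-\lambda)\psi_n\|^2 \longrightarrow 0,
\]
while (3) follows from Cauchy--Schwarz. For the essential spectrum version one may take $\{\psi_n\}$ orthonormal, which automatically yields weak convergence to $0$.

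For the reverse implication for $\sigma(H)$, assume for contradiction that $\lambda \notin \sigma(H)$, so $\sigma(H) \cap (\lambda-\delta, \lambda+\delta) = \emptyset$ for some $\delta > 0$. Let $d\mu_n(t) = d(E_t \psi_n, \psi_n)$ be the scalar spectral measure of $\psi_n$; it is a probability measure supported in $\sigma(H) \cap [0,\infty) \subset \{t : |t-\lambda| \geq \delta\}$, and the spectral theorem turns (2), (3) into
\[
\int f(t)(t-\lambda)^2 \, d\mu_n(t) \longrightarrow 0, \qquad \int (t-\lambda) \, d\mu_n(t) \longrightarrow 0.
\]
If $\lambda = 0$ then (3) alone contradicts $t \geq \delta$ on the support. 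If $\lambda > 0$, first use $(t-\lambda)^2 \geq \delta^2$ together with (2) to deduce $\int f\, d\mu_n \to 0$. Positivity and continuity of $f$ provide a positive lower bound on every compact interval $[0, M]$, forcing $\mu_n([0,M]) \to 0$ for every $M$; the spectral mass must escape to infinity. Picking $M > \lambda$, on $(M, \infty)$ one has $t - \lambda \geq M - \lambda > 0$, while the $[0, M]$ contribution to $\int(t-\lambda)\, d\mu_n$ is bounded by $M\, \mu_n([0,M]) \to 0$. Hence $\liminf \int (t-\lambda)\, d\mu_n \geq M - \lambda > 0$, contradicting (3).

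For the essential spectrum reverse direction, assume (1)--(4) hold but $\lambda \notin \sigma_{\textup{ess}}(H)$. By the previous step $\lambda \in \sigma(H)$, so $\lambda$ is an isolated eigenvalue of finite multiplicity. Let $P_\lambda$ denote the finite-rank spectral projection onto $\ker(H-\lambda)$. Weak convergence $\psi_n \to 0$ combined with finite rank yields $P_\lambda \psi_n \to 0$ strongly, so $\phi_n := \psi_n - P_\lambda \psi_n$ has $\|\phi_n\| \to 1$ and $(H-\lambda)\phi_n = (H-\lambda)\psi_n$. The normalized sequence $\tilde\phi_n = \phi_n / \|\phi_n\|$ then satisfies (1)--(3) relative to the restriction $H' = H|_{P_\lambda^\perp}$, whose spectrum $\sigma(H) \setminus \{\lambda\}$ avoids a neighborhood of $\lambda$. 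Applying the already established "if" direction for $\sigma(\cdot)$ to $H'$ forces $\lambda \in \sigma(H')$, the desired contradiction.

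The main obstacle is the reverse direction for $\sigma(H)$: conditions (2) and (3) must combine to rule out spectral mass escaping to infinity, where $f$ is allowed to vanish. The crucial observation is that (3) controls only a signed first moment, yet it suffices because the mass drifting to $+\infty$ contributes a strictly positive and diverging amount to $\int (t-\lambda)\, d\mu_n$.
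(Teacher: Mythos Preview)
Your proof is correct and uses the same essential mechanism as the paper: for the reverse implication you exploit the spectral gap together with the strict positivity of $f$ on compact intervals to kill the low-frequency mass via condition~(2), and then use the sign of $t-\lambda$ on the high-frequency part via condition~(3). The paper organizes this more directly by splitting $\psi_n=\psi_n^1+\psi_n^2$ at $\lambda\pm\eps$ and bounding
\[
(f(H)(H-\lambda)\psi_n,(H-\lambda)\psi_n)\geq c_1\|\psi_n^1\|^2,\qquad
(\psi_n,(H-\lambda)\psi_n)\geq \eps\|\psi_n^2\|^2-\lambda\|\psi_n^1\|^2,
\]
with $c_1=\inf_{[0,\lambda-\eps]}f(t)(t-\lambda)^2>0$, which yields the contradiction in two lines. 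Your route through ``$\int f\,d\mu_n\to 0$ hence the mass escapes to infinity'' is an equivalent but slightly longer rephrasing of the same idea. On the other hand, you supply a complete argument for the essential-spectrum equivalence (via the finite-rank projection $P_\lambda$ and restriction to $P_\lambda^\perp$), which the paper's proof does not spell out at all.
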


\begin{proof} Since $H$ is a densely defined self-adjoint operator, the spectral measure $E$ exists and we can write
\begin{equation}\label{decomp}
H=\int_0^\infty \lambda\, dE.
\end{equation}

\noindent Assume that $\lambda\in\sigma(H)$. Then by Weyl's criterion, there exists a sequence $\{\psi_n\}$ such that
\[
\|(H-\lambda)\psi_n\|\to 0, \quad \|\psi_n\|=1
\]
as $n\to\infty$.

\noindent We write
\[
\psi_n=\int_0^\infty d E(t)\psi_n
\]
as its spectral decomposition. Then
\[
(f(H) (H-\lambda)\psi_n, (H-\lambda)\psi_n)=\int_0^\infty f(t)(t-\lambda)^2 d\|E(t)\psi_n\|^2.
\]
Since $f$ is a bounded positive function, we have
\[
(f(H) (H-\lambda)\psi_n, (H-\lambda)\psi_n)\leq C\int_0^\infty (t-\lambda)^2 d\|E(t)\psi_n\|^2=C\|(H-\lambda)\psi_n\|^2.
\]
Similarly,
\[
(\psi_n, (H-\lambda)\psi_n)\leq C\,\|\psi_n\|\cdot\|(H-\lambda)\psi_n\|.
\]
Thus the necessary part of the theorem is proved.

Now assume that $\lambda>0$ and $\lambda\notin\sigma(H)$. Then there is a $\lambda>\eps>0$ such that $E(\lambda+\eps)-E(\lambda-\eps)=0$. We write
\begin{equation}\label{fgh}
\psi_n=\psi_n^1+\psi_n^2,
\end{equation}
where
\[
\psi_n^1=\int_0^{\lambda-\eps} dE(t)\psi_n,
\]
and $\psi_n^2=\psi_n-\psi_n^1$.

\noindent Then
\begin{align*}&
(f(H) (H-\lambda)\psi_n, (H-\lambda)\psi_n) \\
&=(f(H) (H-\lambda)\psi_n^1, (H-\lambda)\psi_n^1)
+(f(H) (H-\lambda)\psi^2_n, (H-\lambda)\psi_n^2)\\
& \geq c_1\|\psi_n^1\|^2+(f(H) (H-\lambda)\psi^2_n, (H-\lambda)\psi_n^2)\geq c_1\|\psi_n^1\|^2,
\end{align*}
where the positive number $c_1$ is the infimum of the function $f(t)(t-\lambda)^2$ on $[0,\lambda-\eps]$.
Therefore
\[
\|\psi_n^1\|\to 0
\]
by {\it (2)}. On the other hand, we similarly get
\[
(\psi_n, (H-\lambda)\psi_n)\geq \eps\|\psi_n^2\|^2-\lambda\|\psi_n^1\|^2.
\]
If the criteria {\it (2), (3)} are satisfied, then, by the two {estimates} above, we conclude that both $\psi_n^1, \psi_n^2$ go to zero. This   contradicts $\|\psi_n\|=1$, and the theorem is proved.

\noindent Note that for $\lambda=0$, $\psi_n^1$ is automatically zero, and the second half of the proof would give the contradiction.

\end{proof}

We  apply Theorem \ref{Thm.Weyl.bis-2} to the Laplacian on functions. In this setting two particular cases of the function $f$ will be of interest.

\begin{corT}\label{cor-Thm.Weyl.bis-2}
A nonnegative real number  $\lambda$ belongs to the spectrum $\sigma(H)$ if, and only if,
there exists a sequence $\{\psi_n\}_{n \in \Nat} \subset \Dom(H)$
such that
\begin{enumerate}
\item
$
  \forall n\in\Nat, \quad
  \|\psi_n\|=1
$\,,
\item
$
 ((H+1)^{-1}\psi_n, (H-\lambda)\psi_n)\to 0, \text{ as } n\to\infty \quad {and}
$
\item
$
(\psi_n, (H-\lambda)\psi_n) \to  0, \text{ as } n\to\infty.
$

\end{enumerate}
Moreover, $\lambda$ belongs to $\sigma_\mathrm{ess}(H)$ of $H$ if, and only if,
in addition to the above properties
\begin{enumerate}
\setcounter{enumi}{3}
\item
$
  \psi_n \to 0, \text{ weakly as } n\to\infty
$
\text{ in } $\mathcal H$.
\end{enumerate}%
\end{corT}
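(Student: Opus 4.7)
The plan is to derive the corollary from Theorem~\ref{Thm.Weyl.bis-2} by specializing to the choice $f(t) = (t+1)^{-1}$. This function is continuous, strictly positive, and bounded above by $1$ on $[0,\infty)$, so it satisfies the hypotheses of the theorem. Since $H$ is a nonnegative self-adjoint operator, the Borel functional calculus identifies $f(H)$ with the bounded operator $(H+1)^{-1}$, which is well-defined because $-1\notin\sigma(H)$.

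The substantive step is to reconcile condition~(2) of the corollary, $((H+1)^{-1}\psi_n,(H-\lambda)\psi_n)\to 0$, with the form taken by condition~(2) of the theorem in this specialization, namely $((H+1)^{-1}(H-\lambda)\psi_n,(H-\lambda)\psi_n)\to 0$. I would argue both implications at the level of the spectral measure $d\mu_n(t):=d\|E(t)\psi_n\|^2$. The algebraic identity
\[
\int \frac{(t-\lambda)^2}{t+1}\, d\mu_n(t) \;=\; \int (t-\lambda)\, d\mu_n(t) \;-\; (\lambda+1)\int \frac{t-\lambda}{t+1}\, d\mu_n(t),
\]
combined with condition~(3), shows that the corollary's (2) implies the theorem's (2). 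The reverse direction uses the elementary bound $(t-\lambda)^2/(t+1)^2\le (t-\lambda)^2/(t+1)$ on $[0,\infty)$ together with the Cauchy--Schwarz inequality to obtain
\[
\left|\int \frac{t-\lambda}{t+1}\, d\mu_n(t)\right|^2 \;\le\; \int \frac{(t-\lambda)^2}{(t+1)^2}\, d\mu_n(t) \;\le\; \int \frac{(t-\lambda)^2}{t+1}\, d\mu_n(t),
\]
so the theorem's (2) implies the corollary's (2).

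Conditions~(1), (3), and~(4) coincide in the two statements, so nothing more is needed. The main, and really only, mild obstacle is the algebraic juggling between the two forms of condition~(2); everything else is a direct substitution into Theorem~\ref{Thm.Weyl.bis-2}. The asymmetric form of the corollary's condition~(2) (one factor of $\psi_n$, one factor of $(H-\lambda)\psi_n$) is not merely a cosmetic change but matches the asymmetric bound $\|u\|_{L^\infty}\cdot\|\Delta u+\lambda u\|_{L^1}$ appearing in the hypothesis of Theorem~\ref{thm00}, which is presumably why this version of the criterion is singled out.
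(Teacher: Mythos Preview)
Your proof is correct and follows the same approach as the paper: specialize Theorem~\ref{Thm.Weyl.bis-2} to $f(t)=(t+1)^{-1}$ and exploit the identity $(H+1)^{-1}(H-\lambda)=I-(\lambda+1)(H+1)^{-1}$, which you have written in its spectral-integral form. The only cosmetic difference is that the paper observes this single identity already yields \emph{both} directions of the equivalence between the two forms of condition~(2) (once~(3) is granted), so your separate Cauchy--Schwarz argument for the reverse implication, while valid, is not needed.
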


\begin{proof} We take $f(x)=(x+1)^{-1}$. The  corollary follows from the identity
\[
(H+1)^{-1}(H-\lambda)=1-(\lambda+1)(H+1)^{-1}.
\]

\end{proof}

In a similar spirit, taking $f(x)=(x+\alpha)^{-(N+1)}$ for a natural number $N$ and a positive number $\alpha>1$, we also obtain the following generalization
\begin{corT}\label{cor-Thm.Weyl.bis-3}
A nonnegative real number  $\lambda$ belongs to the spectrum $\sigma(H)$ if, and only if,
there exists a sequence $\{\psi_n\}_{n \in \Nat} \subset \Dom(H)$
such that
\begin{enumerate}
\item
$
  \forall n\in\Nat, \quad
  \|\psi_n\|=1
$\,,
\item
$
 ((H+\alpha)^{-i}\psi_n, (H-\lambda)\psi_n)\to 0 \text{ as } n\to\infty \ $ for two consecutive natural numbers i=N, N+1, and
\item
$
(\psi_n, (H-\lambda)\psi_n) \to  0, \text{ as } n\to\infty.
$

\end{enumerate}
Moreover, $\lambda$ belongs to $\sigma_\mathrm{ess}(H)$ of $H$ if, and only if,
in addition to the above properties
\begin{enumerate}
\setcounter{enumi}{3}
\item
$
  \psi_n \to 0, \text{ weakly as } n\to\infty
$
\text{ in } $\mathcal H$.
\end{enumerate}%
\end{corT}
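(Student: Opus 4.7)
The plan is to apply Theorem \ref{Thm.Weyl.bis-2} with the choice $f(x) = (x+\alpha)^{-(N+1)}$. Since $\alpha>1>0$, this $f$ is positive, continuous, and bounded above by $\alpha^{-(N+1)}$ on $[0,\infty)$, so the hypothesis on $f$ in Theorem \ref{Thm.Weyl.bis-2} is satisfied. The real content is an elementary operator identity: because $H$ commutes with $(H+\alpha)^{-1}$,
\[
(H+\alpha)^{-(N+1)}(H-\lambda) = (H+\alpha)^{-(N+1)}\bigl((H+\alpha)-(\alpha+\lambda)\bigr) = (H+\alpha)^{-N} - (\alpha+\lambda)(H+\alpha)^{-(N+1)}.
\]

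For the sufficient direction, pairing this identity against $(H-\lambda)\psi_n$ yields
\[
(f(H)(H-\lambda)\psi_n,(H-\lambda)\psi_n) = ((H+\alpha)^{-N}\psi_n,(H-\lambda)\psi_n) - (\alpha+\lambda)((H+\alpha)^{-(N+1)}\psi_n,(H-\lambda)\psi_n).
\]
Hypothesis \emph{(2)} of the corollary, applied at the two consecutive values $i=N$ and $i=N+1$, forces both terms on the right to vanish in the limit. Thus condition \emph{(2)} of Theorem \ref{Thm.Weyl.bis-2} holds, and combined with \emph{(1)} and \emph{(3)} the theorem delivers $\lambda\in\sigma(H)$. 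For the converse, if $\lambda\in\sigma(H)$ then the classical Weyl criterion provides a sequence with $\|\psi_n\|=1$ and $\|(H-\lambda)\psi_n\|\to 0$; since $(H+\alpha)^{-i}$ has operator norm at most $\alpha^{-i}$ on $\mathcal H$, Cauchy--Schwarz gives $|((H+\alpha)^{-i}\psi_n,(H-\lambda)\psi_n)|\leq \alpha^{-i}\|(H-\lambda)\psi_n\|\to 0$ for each such $i$, and similarly for the pairing in \emph{(3)}.

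The essential-spectrum refinement is obtained by adding the weak-convergence condition \emph{(4)} and invoking the corresponding half of Theorem \ref{Thm.Weyl.bis-2}. There is no genuine analytic obstacle: the entire argument reduces to the telescoping identity above plus the boundedness of powers of the resolvent $(H+\alpha)^{-1}$, so the only step that requires thought is identifying the right $f$ that converts the quadratic-form pairing in Theorem \ref{Thm.Weyl.bis-2} into the two-resolvent formulation stated in the corollary.
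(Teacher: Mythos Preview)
Your proof is correct and follows exactly the approach the paper indicates: choose $f(x)=(x+\alpha)^{-(N+1)}$ in Theorem~\ref{Thm.Weyl.bis-2} and use the telescoping identity $(H+\alpha)^{-(N+1)}(H-\lambda)=(H+\alpha)^{-N}-(\alpha+\lambda)(H+\alpha)^{-(N+1)}$, which is the direct analogue of the identity used for Corollary~\ref{cor-Thm.Weyl.bis-2}. The paper itself only states this choice of $f$ without spelling out the details, so your write-up is in fact more complete than the original.
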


\begin{remark}
Using the Cauchy inequality, the above two corollaries reduce to  Donnelly's criterion~\eqref{sdf} when we consider the case $H=-\Delta$.
\end{remark}

\section{A spectrum estimate result}

In this section we will prove a special version of Theorem~\ref{Thm.Weyl.bis-2} for the Laplacian on functions. We begin {with}  the fact that its resolvent is always bounded on $L^\infty$.

\begin{lem} \label{lem42}
We have
\[
(-\Delta+1)^{-1}
\]
is bounded from $L^\infty(M)$ to itself and the operator norm is no more than $1$.
\end{lem}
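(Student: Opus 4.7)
The plan is to represent $(-\Delta+1)^{-1}$ as a convolution with the Green kernel of $-\Delta+1$ and deduce the $L^\infty$ bound from the sub-Markov property of the heat semigroup. The key point is that, although $L^\infty(M) \not\subset L^2(M)$ in general, the resolvent has a nonnegative integral kernel whose $y$-integral is at most $1$.

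First, I would recall that on the complete manifold $M$ the minimal heat kernel $p(t,x,y)$ is a smooth, symmetric, nonnegative function satisfying the sub-Markov bound
\[
\int_M p(t,x,y)\,dy \,\leq\, 1 \qquad \text{for every } t>0,\ x\in M.
\]
This follows by applying the parabolic maximum principle to the heat equation on a relatively compact exhaustion of $M$ and passing to the limit. Consequently, for any $f\in L^\infty(M)$, the function $(e^{t\Delta}f)(x):=\int_M p(t,x,y)f(y)\,dy$ is well-defined pointwise and satisfies $\|e^{t\Delta}f\|_{L^\infty}\leq \|f\|_{L^\infty}$.

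Next, by the spectral theorem applied to the nonnegative self-adjoint operator $-\Delta$,
\[
(-\Delta+1)^{-1} \,=\, \int_0^\infty e^{-t}\, e^{t\Delta}\,dt
\]
on $L^2(M)$. Writing this out in terms of heat kernels, the resolvent has the integral kernel
\[
G(x,y) \,:=\, \int_0^\infty e^{-t}\, p(t,x,y)\,dt \,\geq\, 0,
\]
and Fubini together with the sub-Markov bound gives
\[
\int_M G(x,y)\,dy \,=\, \int_0^\infty e^{-t}\!\int_M p(t,x,y)\,dy\,dt \,\leq\, \int_0^\infty e^{-t}\,dt \,=\, 1.
\]

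Finally, for $f\in L^\infty(M)$, define $u(x):=\int_M G(x,y)f(y)\,dy$; this extends $(-\Delta+1)^{-1}$ from $L^2\cap L^\infty$ to $L^\infty$, agreeing with the $L^2$ definition whenever both make sense (as one checks by approximating $f$ by truncations). The desired estimate is then immediate:
\[
|u(x)| \,\leq\, \|f\|_{L^\infty}\int_M G(x,y)\,dy \,\leq\, \|f\|_{L^\infty}.
\]
The only real obstacle is the sub-Markov inequality $\int_M p(t,x,y)\,dy\leq 1$; everything else is standard manipulation once that is in hand, and on complete manifolds this inequality is classical (one does \emph{not} need stochastic completeness to get $\leq 1$; only equality requires it).
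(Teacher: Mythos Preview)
Your proof is correct and follows essentially the same route as the paper: both obtain the $L^\infty$ bound on the resolvent from the $L^\infty$ contractivity of the heat semigroup via the Laplace transform identity $(-\Delta+1)^{-1}=\int_0^\infty e^{-t}e^{t\Delta}\,dt$. The paper simply cites Davies' results (based on Kato's inequality) for the semigroup contractivity, whereas you spell it out via the sub-Markov bound $\int_M p(t,x,y)\,dy\leq 1$ obtained from the maximum principle---this is the same underlying fact, just made explicit.
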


The lemma follows from the proof of Lemma 3.1 in \cite{CharJDE}. The resolvent is bounded on $L^\infty$ because the heat kernel is bounded on $L^\infty$. This is a property that Davies proves for any nonnegative self-adjoint operator that satisfies Kato's inequality like the Laplacian ~\cite{Davies}*{Theorems 1.3.2,1.3.3}. It is due to the well-known fact that the Laplacian  on functions   is a self-adjoint operator that satisfies Kato's inequality.  {Together with Corollary~\ref{cor-Thm.Weyl.bis-2} this lemma allows us to obtain an even simpler criterion for the essential spectrum of the Laplacian on functions:}

\begin{proof}[Proof of Theorem~\ref{thm00}]
By the above lemma, we have
\begin{align*}
& |(u, (-\Delta-\lambda)u)|\leq\|u\|_{L^\infty}\cdot \|(-\Delta-\lambda)u\|_{L^1}\\
&
|((-\Delta+1)^{-1}(-\Delta-\lambda)u, (-\Delta-\lambda)u)|\leq \lambda\|u\|_{L^\infty}\cdot \|(-\Delta-\lambda)u\|_{L^1}
\end{align*}
We write
\[
u=u_1+u_2
\]
according to the spectrum decomposition of the operator $-\Delta$ (cf. ~\eqref{fgh}). Then we have
\begin{align*}
& \|u_1\|_{L^2}^2\leq \frac{\lambda(\lambda+1)}{\eps^2}\sigma\|u\|_{L^2}^2;\\
&\eps\|u_2\|_{L^2}^2-\lambda\|u_1\|_{L^2}^2\leq\sigma\|u\|^2_{L^2}.
\end{align*}
Thus we have
\[
\eps \|u_2\|_{L^2}^2+\eps\|u_1\|_{L^2}^2\leq \left(\frac{\lambda(\lambda+1)(\lambda+\eps)}{\eps^2}+1\right)\sigma\|u\|_{L^2}^2.
\]
The conclusion follows since $\|u_2\|_{L^2}^2+\|u_1\|_{L^2}^2=\|u\|_{L^2}^2$.

The essential spectrum result of the theorem follows from the classical Weyl criterion (Theorem~\ref{Thm.Weyl}, (3)).
\end{proof}

\section{An Approximation Theorem}
Let $M$ be a complete noncompact Riemannian manifold. Let $p\in M$ be a fixed point. Define
\[
r(x)=d(x,p)
\]
to be the radial function on $M$.
It is well known that
\begin{enumerate}
\item $r(x)$ is continuous;
\item $|\nabla r(x)|=1$ almost everywhere and $r(x)$ is a Lipschitz function;
\item $\Delta r$ exists on $M\backslash \{p\}$  in the sense of distribution.
\end{enumerate}

In general, it is not possible to find smooth approximations of a Lipschitz function under the  $\mathcal C^1$ norm. The following Proposition, which is a more precise version of ~\cite{Lu-Zhou_2011}*{Proposition 1}, implies that this can be done  up to a function with small $L^1$ norm.
Such kind of result is essential in Riemannian geometry and should be well-known, but  given that we were not able to find a reference, we include  a proof.

\begin{prop}\label{pp-1}
For any positive continuous decreasing function $\eta: \mathbb R^+\to\mathbb R^+$ such that
\[
\lim_{r\to\infty} \eta(r)=0,
\]
 there exist  $C^\infty$ functions $\tilde r(x)$ and $b(x)$ on $M$ such that
\begin{enumerate}
 \item[(a).] $\|b\|_{L^1(M\backslash B_{{p}}(R))}\leq\eta(R-1)$;
 \item[(b).] $\|\nabla \tilde r-\nabla r\|_{L^1(M\backslash B_{{p}}(R))}\leq\eta(R)$
\end{enumerate}
and for any $x\in M$ with $r(x)>2$
\begin{enumerate}
 \item[(c).]  $|\tilde r(x)-r(x)|\leq \eta(r(x))$ and $|\nabla\tilde r (x)|\leq 2$;
 \item[(d).] $\Delta\tilde r(x)\leq \max_{y\in B_x(1)} \Delta r(y)+\eta(r(x))+|b(x)|$  {in the sense of distribution}.
\end{enumerate}

 \end{prop}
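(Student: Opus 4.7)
The plan is to construct $\tilde r$ by locally mollifying $r$ in exponential normal coordinates and gluing the pieces by a smooth partition of unity whose supporting balls and mollification scales shrink to zero sufficiently quickly as $r(x)\to\infty$ to match the prescribed $\eta$. Concretely, cover $\{r>2\}$ by a locally finite collection of geodesic balls $B_i=B_{x_i}(\delta_i)$ with $\delta_i<\min(1/10,\,\tfrac12\mathrm{inj}(x_i))$, bounded overlap, and $\delta_i\to 0$ as $r(x_i)\to\infty$. Let $\{\phi_i\}$ be a smooth subordinate partition of unity with $|\nabla\phi_i|\le C/\delta_i$ and $|\Delta\phi_i|\le C/\delta_i^2$. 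In exponential coordinates at $x_i$ let $\psi_{\epsilon_i}$ be a standard nonnegative Euclidean bump of total mass $1$ supported in $B(0,\epsilon_i)$ with $\epsilon_i\ll\delta_i$, and set $r_i$ to be the Euclidean convolution of the pull-back of $r$ with $\psi_{\epsilon_i}$. Define
$$
\tilde r(x) = \sum_i \phi_i(x)\,r_i(x)
$$
on $\{r>2\}$, extending smoothly across $\{r\le 2\}$.

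Standard mollification estimates, together with the fact that the metric on $B_{x_i}(2\delta_i)$ is bi-Lipschitz to the Euclidean one with distortion $1+O(\delta_i^2)$, give $|r_i-r|\le C\epsilon_i$ and $\|\nabla r_i-\nabla r\|_{L^1(B_i)}$ arbitrarily small in $\epsilon_i$. From these, the partition-of-unity identities $\sum\phi_i=1$ and $\sum\nabla\phi_i=0$, and the decomposition $\nabla\tilde r = \sum\phi_i\nabla r_i + \sum(r_i-r)\nabla\phi_i$ (used to cancel the naive cross term $r\sum\nabla\phi_i$), one obtains (b), the first inequality in (c), and the bound $|\nabla\tilde r|\le 2$ once the scales are taken small enough.

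The crucial structural input for (d) is the semiconcavity of the distance function: on $M\setminus\{p\}$ the distributional Laplacian decomposes as
$$
\Delta r = H + \nu,
$$
with $H\in L^\infty_{\mathrm{loc}}$ equal to the classical Laplacian off the cut locus and $\nu\le 0$ a nonpositive Radon measure supported on $\mathrm{Cut}(p)$ (this follows from $\Delta(r^2)\le C$ distributionally, via Laplacian comparison applied locally). Convolution with the nonnegative mollifier preserves the sign of $\nu$, so, up to a Riemannian--Euclidean correction of order $\delta_i^2/\epsilon_i$,
$$
\Delta r_i(x)\le H_{\epsilon_i}(x) \le \max_{y\in B_x(1)}\Delta r(y)
$$
whenever $\epsilon_i+\delta_i<1$. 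Using $\sum\Delta\phi_i = 0$ and $\sum\nabla\phi_i = 0$, expand distributionally
$$
\Delta\tilde r = \sum_i\phi_i\Delta r_i + \sum_i(r_i-r)\Delta\phi_i + 2\sum_i\nabla\phi_i\cdot\nabla(r_i-r),
$$
bound the first sum by $\max_{B_x(1)}\Delta r$, and absorb the remaining two sums, of pointwise size $O(\epsilon_i/\delta_i^2)$ on each $B_i$, into the error function $b(x)$.

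The main obstacle is to choose the scales $\delta_i,\epsilon_i$ so that (a)--(d) all hold simultaneously against the prescribed $\eta$: one needs $\delta_i^2/\epsilon_i$ small (for the Riemannian correction to the Euclidean Laplacian of $r_i$), $\epsilon_i$ small (for (b), (c) and the first inequality of (d)), and the dyadic tail sums of $(\epsilon_i/\delta_i^2)\cdot\mathrm{vol}(B_i)$ over annuli summable to something controlled by $\eta$ (for (a) and for the $L^1$ part of (b)). Taking, for instance, $\epsilon_i$ between $\delta_i^{3/2}$ and $\delta_i$ and then driving $\delta_i\to 0$ at any prescribed rate along $r(x_i)\to\infty$ resolves all these constraints. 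The only genuinely analytic input is the nonpositivity of the singular part $\nu$, which is what makes the mollification of $\Delta r$ land automatically on the correct side of $\max_{B_x(1)}\Delta r$; the remainder is bookkeeping.
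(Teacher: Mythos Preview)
Your outline follows the paper's strategy closely: mollify $r$ in local charts and glue with a partition of unity, using the semiconcavity of $r$ to bound the Laplacian of the mollification from above. The difference is that the paper keeps the cover $\{U_i\}$ and partition $\{\psi_i\}$ \emph{fixed} and only drives the mollification scale $\epsilon_i$ to zero, whereas you also shrink the chart radii $\delta_i\to 0$ in order to tame a Riemannian--Euclidean correction.

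That extra shrinking creates a genuine gap. In your expansion
\[
\Delta\tilde r \;=\; \sum_i\phi_i\,\Delta r_i \;+\; \sum_i(r_i-r)\,\Delta\phi_i \;+\; 2\sum_i\nabla\phi_i\cdot\nabla(r_i-r),
\]
the middle term has pointwise size of order $\epsilon_i/\delta_i^2$. With $\epsilon_i\in[\delta_i^{3/2},\delta_i]$ this is at least $\delta_i^{-1/2}\to\infty$, so it cannot be absorbed into the pointwise error $\eta(r(x))$. If instead you put it into $b$, then by bounded overlap its $L^1$ mass over the annulus $\{R<r<R+1\}$ is of order $(\epsilon_i/\delta_i^2)\cdot\mathrm{vol}(\text{annulus})$, which again diverges as $\delta_i\to 0$; so (a) fails as well. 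The two constraints $\delta_i^2/\epsilon_i\to 0$ (your Riemannian correction) and $\epsilon_i/\delta_i^2\to 0$ (this cross term) are incompatible, and your claimed range $\epsilon_i\in[\delta_i^{3/2},\delta_i]$ does not resolve them.

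The remedy is simply not to shrink $\delta_i$. Fix any locally finite cover; choose positive numbers $\sigma_i$ so small that $\sum_i\sigma_i\bigl(|\Delta\phi_i(x)|+4|\nabla\phi_i(x)|+\phi_i(x)\bigr)\le\eta(r(x))$, which is possible because at each $x$ only finitely many terms are nonzero and the bounds on $|\Delta\phi_i|,|\nabla\phi_i|$ are now fixed finite numbers; then choose $\epsilon_i$ so that $|r_i-r|\le\sigma_i$ and $\|\nabla r_i-\nabla r\|_{L^1(U_i)}\le\sigma_i$. With this bookkeeping the middle term above lands in $\eta(r(x))$ and only the gradient cross term is declared to be $b$, exactly as in the paper. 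Your concern about the Riemannian--Euclidean discrepancy is actually handled by the very semiconcavity you invoked: in \emph{any} coordinate chart the inverse metric $g^{jk}(x)$ is positive definite and the singular part of $D^2r$ is a nonpositive-definite measure, so $g^{jk}(x)\,\bigl((D^2r)^{\mathrm{sing}}\bigr)_{\epsilon,jk}\le 0$ regardless of how close the chart is to Euclidean. After discarding this term only the locally bounded regular Hessian remains, and the variable-coefficient error is $O(\epsilon_i)$ rather than $O(\delta_i^2/\epsilon_i)$. So sending $\delta_i\to 0$ was never needed.
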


\begin{proof}
 Without loss of generality, we assume that $\eta(r)<1$.
 Let $\{U_i\}$ be a locally finite cover of $M$ and let $\{\psi_i\}$ be the partition of unity subordinate to the cover. Let ${\bf x_i}=(x_i^1,\cdots,x_i^n)$ be the local coordinates of $U_i$. Define $r_i=r|_{U_i}$.

Let $\xi(\bf x)$ be a non-negative smooth function on $\R^n$ whose support is within the unit ball. Assume that
 \[
 \int_{\mathbb R^n}\xi=1.
 \]
 Without loss of generality, we assume that each $U_i$ is an open subset of the unit ball of $\mathbb R^n$ with coordinates ${\bf x_i}$. Then for any $\eps>0$,
  \[
 r_{i,\eps}=\frac{1}{\eps^n}\int_{\mathbb R^n}\xi\left(\frac{{\bf x_i}-{\bf y_i}}{\eps}\right)r_i({\bf y_i}) d{\bf y_i}
 \]
 is a smooth function on $U_i$ and hence on $M$. Let $\{\sigma_i\}$ be a sequence of positive numbers
 such that
 \begin{equation}
 \sum_i\sigma_i(|\Delta\psi_i(x)|+4|\nabla\psi_i (x)|+\psi_i(x))\leq\eta(r(x)).
 \end{equation}
By ~\cite{gt}*{Lemma 7.1, 7.2}, for each $i$, we can choose $\eps_i<1$ small enough so that
 \begin{align}\label{oio}
 \begin{split}
 &|r_{i,\eps_i}(x)-r_i(x)|\leq\sigma_i;\\
 &\|\nabla r_{i,\eps_i}-\nabla r_i\|_{L^1{(U_i)}}\leq\sigma_i.
 \end{split}
 \end{align}
We also have
\begin{equation}\label{oio-2}
 \Delta r_{i,\eps_i}(x)\leq \max_{y\in B_x(1)}\Delta r_i(y).
  \end{equation}

Define
 \[
 \tilde r=\sum_i\psi_i r_{i,\eps_i}, \quad b=2 { \sum_i}  \nabla\psi_i\cdot \nabla r_{i,\eps_i}.
 \]
Since $\sum_i (\nabla\psi_i\cdot \nabla r_i)=(\sum_i\nabla \psi_i) \cdot \nabla r=0$ {almost everywhere on $M$},
we have  \[
b=2\sum_i \nabla\psi_i\cdot (\nabla r_{i,\eps_i}-\nabla r_i)
\]
almost everywhere. Thus {\it (a)} follows. Similarly, observing that
\[
\tilde r-r=\sum_i\psi_i(r_{i,\eps_i}-r_i), {\quad{\rm and}\ \ |\nabla r_{i,\eps_i}|<2,}
\]
we obtain {\it (b), (c)}.

To prove {\it (d)}, we compute
\[
\Delta\tilde r=\sum_i[(\Delta\psi_i)\, r_{i,\eps_i}+2\nabla\psi_i\nabla r_{i,\eps_i}+\psi_i\Delta r_{i,\eps_i}],
\]
and
since
\[
\sum_i(\Delta\psi_i ) r_i=\sum_i (\Delta\psi_i) r=0,
\]
we have
\[
\Delta\tilde r=\sum_i[\Delta\psi_i(r_{i,\eps_i}- r_i)+b+\psi_i\Delta r_{i,\eps_i}].
\]
By~\eqref{oio-2}, we obtain {\it (d)} and the Proposition is proved.

\end{proof}

\section{Manifolds with $\Delta r$ Asymptotically Nonpositive}
{As we have mentioned in the previous section,} the Laplacian of the radial function {$r(x)=d(x,p)$} exists in the sense of distribution (except at $p$). That is, for any nonnegative smooth function $f$ with compact support in $M\backslash\{p\}$, the integral
\[
\int_Mf \Delta r
\]
{is defined}. The following simple observation  is due to Wang~\cite{Wang97} and is crucial in our estimates.

\begin{lem}\label{wang}
The function $\Delta r$ is locally integrable away from $p$.
\end{lem}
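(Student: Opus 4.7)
The plan is to realize $\Delta r$ as a signed Radon measure of locally finite total variation on $M\setminus\{p\}$. I would combine the classical Laplacian comparison theorem (which gives a pointwise upper bound on $\Delta r$ on the smooth part) with the smoothing of Proposition~\ref{pp-1} (which transfers this bound to smooth approximations while controlling gradients), and then pass to a weak-$*$ limit in the space of signed Radon measures.

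First, fix a compact set $K\subset M\setminus\{p\}$ and a slightly larger compact neighborhood $K'$, also bounded away from $p$. On $K'$ one has $r\geq R_0>0$ and, by continuity of Ricci, $\mathrm{Ric}\geq-(n-1)k$ for some $k\geq 0$, so the Laplacian comparison theorem yields a pointwise bound $\Delta r\leq C$ on the smooth part of $r$ in a neighborhood of $K'$, with $C=C(R_0,k)<\infty$.

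Next, apply Proposition~\ref{pp-1} with small $\eta$ (and, if necessary, rescaling the mollification so that the balls $B_x(1)$ appearing in property (d) stay in the region where the comparison bound is valid). Property (d) then gives
$$\Delta\tilde r(x)\leq\max_{y\in B_x(1)}\Delta r(y)+\eta(r(x))+|b(x)|\leq C+\eta(r(x))+|b(x)|$$
on $K$, so $(\Delta\tilde r)^+$ is uniformly bounded in $L^1(K)$ by property (a). For the negative part, test against a smooth cutoff $\phi$ with $\phi\equiv 1$ on $K$ and $\mathrm{supp}\,\phi\subset K'$: integration by parts gives $\int\phi\,\Delta\tilde r=-\int\nabla\phi\cdot\nabla\tilde r$, which is uniformly bounded via the gradient bound $|\nabla\tilde r|\leq 2$ from property (c). Combined with the control on the positive part, this yields a uniform $L^1(K)$ bound on $\Delta\tilde r$.

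Finally, since $\tilde r\to r$ in $L^1_{\mathrm{loc}}$ by property (c), one has $\Delta\tilde r\to\Delta r$ in the sense of distributions. The uniform $L^1(K)$ bound allows me to extract a weak-$*$ subsequential limit in the space of signed Radon measures on $K$, which must coincide with $\Delta r$; hence $\Delta r$ is a signed Radon measure of finite total variation on $K$, which is the desired local integrability. The main obstacle I anticipate is the bookkeeping needed to keep the constant $C$ uniform through the smoothing: one must choose the mollification scale small enough and $K'$ slightly larger than $K$ so that $\max_{B_x(1)}\Delta r\leq C$ continues to hold uniformly for all $x\in K$ and all approximants. The nonnegativity argument (that the cut-locus contribution enters with the correct sign) is encoded implicitly in property (d) and is what makes the upper-bound transfer possible.
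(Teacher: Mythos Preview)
Your argument is correct, but it takes a considerably longer route than the paper's. The underlying ingredients are identical---the Laplacian comparison bound $\Delta r\le C$ together with an integration-by-parts estimate---yet you filter everything through the smooth approximants $\tilde r$ of Proposition~\ref{pp-1} and then invoke weak-$*$ compactness of signed measures to pass to the limit. The paper instead argues directly at the distributional level: since $C-\Delta r\ge 0$ as a distribution on the annulus $W=B_p(R)\setminus B_p(r)$, it is a positive measure, so $\Delta r$ is automatically a signed measure and $|\Delta r|\le 2C-\Delta r$; one line of Stokes then gives
\[
\int_W|\Delta r|\le 2C\,\vol(W)+\vol(\pa W).
\]
This bypasses Proposition~\ref{pp-1} and any compactness argument entirely, and moreover yields an explicit quantitative bound that your limiting procedure does not immediately produce. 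Your approach does have the virtue that all computations are carried out with genuinely smooth functions, so no care is needed in interpreting $|\Delta r|$ or applying Stokes to a Lipschitz function; but the price is invoking a substantially heavier tool (Proposition~\ref{pp-1}, whose property~(d) already encodes the comparison theorem you start from) and some bookkeeping about mollification scales that the direct argument avoids.

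One small remark: your conclusion that $\Delta r$ is ``a signed Radon measure of finite total variation'' is exactly what the paper means by ``locally integrable'' here---as the $S^1\times\mathbb{R}$ example in the introduction shows, $\Delta r$ can carry a singular part on the cut locus, so neither proof claims an $L^1$ density.
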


\begin{proof}
Let $W$ be any compact set of the form $B_p(R)-B_p(r)$  {for $R>r>0$}. Then by the Laplacian comparison theorem, there  is a constant $C$, depending only on the dimension,  $r$, $R$, and the lower bound of the Ricci curvature on $B_p(R)$, such that
\[
\Delta r\leq C
\]
on $W$ in the sense of distribution.  Thus we have
\[
|\Delta r|=|C-\Delta r-C|\leq 2C-\Delta r
\]
and therefore
\[
\int_W|\Delta r|\leq 2C\,\vol(W) -\int_W\Delta r.
\]
Using  Stokes' Theorem, we obtain
\[
\int_W |\Delta r|\,\leq 2C\,\vol(W)-\int_{\pa W}\frac{\pa r}{\pa n}\leq 2C\vol(W)+\vol(\pa W),
\]
and the lemma is proved.

\end{proof}

In this section, we study manifolds with the following property
\begin{equation} \label{DeltarAsy}
\overline{\lim_{r\to \infty}}\; \Delta r\leq 0
\end{equation}
in the sense of distribution, where $r(x)$ is the radial distance of $x$ to a fixed point $p$.
We shall give a precise estimate of the $L^1$ norm of $\Delta r$ in terms of the volume growth of the manifold.
But before we do that,
we first provide an important example where the above technical condition holds.

We note that for a fixed point $p\in M$ the cut locus  ${\rm Cut}(p)$  is a set of measure zero in $M$. The manifold can be written as the disjoint union $M=\Omega\cup {\rm Cut}(p)$, where $\Omega$ is star-shaped with respect to $p$. That is, if $x\in \Omega$, then the geodesic line {segment} $\overline {px}\subset \Omega$. $\p r= \p /\p r$ is well defined on  $\Omega$. We have the following result

\begin{lem}\label{DeltarEstim} Let $r(x)$ be the radial function with respect to $p$.
Suppose that there exists a continuous function $\delta(r)$ on $\mathbb{R}^+$ such that
\begin{enumerate}
\item [(i).] ${\displaystyle \lim_{r\to\infty} \delta(r)=0}$
\item [(ii).] $\delta(r)>0$ and
\item[(iii).] ${\rm Ric}(\p r, \p r)\geq -(n-1) \delta(r)$ on $\Omega$.
\end{enumerate}
Then
~\eqref{DeltarAsy} is valid
in the sense of distribution.
\end{lem}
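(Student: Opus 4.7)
The plan is to reduce the statement to a scalar Riccati comparison along radial geodesics and then carry out an elementary asymptotic analysis of the comparison ODE. The proof falls into three natural steps, concluded by a routine but slightly delicate extension across the cut locus.

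First, on $\Omega$ the function $r$ is smooth, so I fix a unit-speed minimizing geodesic $\gamma$ issuing from $p$ and set $\phi(s) := (\Delta r)(\gamma(s))$. Bochner's formula applied to $r$, together with $|\nabla r|=1$, gives
\[
\phi'(s) \;=\; -|\mathrm{Hess}\,r|^2 \;-\; \mathrm{Ric}(\partial_r,\partial_r).
\]
Since $\mathrm{Hess}\,r$ annihilates $\partial_r$ and has trace $\Delta r$, the Cauchy--Schwarz inequality applied to its restriction to $\partial_r^\perp$ gives $|\mathrm{Hess}\,r|^2 \geq \phi^2/(n-1)$. Combining with hypothesis (iii) yields the scalar Riccati inequality
\[
\phi'(s) \;\leq\; -\frac{\phi(s)^2}{n-1} \;+\; (n-1)\,\delta(s)
\]
along $\gamma$, valid in the classical sense on $\Omega$.

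Second, I compare $\phi$ with a one-dimensional model. Let $h$ be the unique solution of $h''(s)=\delta(s)h(s)$ with $h(0)=0$, $h'(0)=1$; since $\delta\geq 0$, $h$ is positive and increasing on $(0,\infty)$, and $u:=h'/h$ is positive, satisfies $u'=\delta-u^{2}$, and has the asymptotic $u(s)\sim 1/s$ as $s\to 0^+$. Because $\phi(s)\sim (n-1)/s$ near $p$, a standard ODE comparison based on the Riccati inequality above gives $\phi(s)\leq (n-1)u(s)$ on $\Omega$. I then analyze $u$ as $s\to\infty$: given $\alpha>0$, choose $R$ so that $\delta(s)<\alpha^{2}/2$ for $s\geq R$; at any $s\geq R$ with $u(s)\geq\alpha$ one has $u'(s)\leq\alpha^{2}/2-\alpha^{2}=-\alpha^{2}/2<0$, so $u$ cannot stay in $[\alpha,\infty)$ forever (it would become negative) and cannot re-enter $[\alpha,\infty)$ after leaving (an upward crossing of $\alpha$ would require $u'\geq 0$). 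Hence $\limsup_{s\to\infty}u(s)\leq\alpha$ for every $\alpha>0$, and $u(s)\to 0$.

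Finally, I upgrade the pointwise bound $\Delta r\leq (n-1)u(r(x))$ on $\Omega$ to a distributional inequality on $M\setminus\{p\}$. The standard Calabi trick works here: for any $x\in\mathrm{Cut}(p)$ and any minimizing $\gamma$ from $p$ to $x$, pick $q=\gamma(\varepsilon)$ with $\varepsilon>0$ small, apply the smooth comparison to the function $r_q(y)=d(q,y)+\varepsilon$ which is smooth at $x$ and satisfies $r_q\geq r$ with equality at $x$, and let $\varepsilon\to 0$. Standard support arguments (as in Cheeger--Gromoll or Wang~\cite{Wang97}) then show that the distributional Laplacian of $r$ differs from its classical value on $\Omega$ only by a nonpositive singular measure supported on $\mathrm{Cut}(p)$, and the bound $\Delta r\leq (n-1)u(r(x))$ passes to the sense of distributions. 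Since $(n-1)u(r)\to 0$, this is exactly $\overline{\lim}_{r\to\infty}\Delta r\leq 0$ in the distributional sense.

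The bulk of the work is routine. The only genuinely delicate step is the distributional extension across $\mathrm{Cut}(p)$, and even there the sign of the singular contribution is the favorable one, so no extra hypothesis is required beyond those already in the statement.
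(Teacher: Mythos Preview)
Your proof is correct and follows essentially the same strategy as the paper: Bochner's formula on $\Omega$ yields the Riccati inequality~\eqref{bochner}, a comparison with the one-dimensional model equation handles the asymptotic $\Delta r\to 0$ along each radial geodesic, and the Calabi/Gromov support-function trick (what the paper calls ``the trick of Gromov'' citing Schoen--Yau) extends the bound across the cut locus in the distributional sense. The only difference is that the paper compresses the Riccati comparison into a single sentence, whereas you spell out the model solution $u=h'/h$ and give an explicit elementary argument for $u(s)\to 0$; this extra detail is helpful but not a genuinely different route.
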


\begin{proof}
On $\Omega$, we have the following Bochner formula
\[
0=\frac 12\Delta|\nabla r|^2=|\nabla^2r |^2+\nabla r\cdot \nabla(\Delta r)+{\rm Ric}(\pa r,\pa r).
\]
Since $\nabla ^2 r(\pa r, \pa r)=0$,
 using the Cauchy inequality, we have
\begin{equation}\label{bochner}
0\geq\frac 1{n-1}(\Delta r)^2+\frac{\pa}{\pa r}(\Delta r) -(n-1)\delta(r).
\end{equation}
Since $\Omega$ is star-shaped, for any fixed direction $\pa/\pa r$, we obtain~\eqref{DeltarAsy} by comparing the above differential inequality with the  Riccati equation.

On the points where $r$ is not smooth, we may use the trick of Gromov {as in Proposition 1.1 of~\cite{SchoenYau_bk} to conclude the result in the sense of distribution.}
\end{proof}

\subsection{Volume comparison theorems}

Let $p$ be the fixed point of the manifold. Denote
\[
B(r)=B_p(r),\quad V(r)=\vol(B_p(r))
\]
the geodesic ball of radius $r$ at $p$ and its volume respectively.

The following volume comparison theorem is well-known.

\begin{lem} \label{LemSubexp} Let $r(x)$ be the radial function defined above.  Assume that ~\eqref{DeltarAsy} is valid in the sense of distribution. Then the manifold has subexponential volume growth at $p$. In other words, for all $\eps>0$ there exists a positive constant $C(\eps)$,  depending only on $\eps$ and the manifold, such that for all $R>0$
\[
V(R)\leq C(\eps)  \, e^{\eps\, R}.
\]
\end{lem}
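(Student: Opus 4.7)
The plan is to convert the distributional hypothesis $\overline{\lim_{r\to\infty}}\,\Delta r \le 0$ into a first-order differential inequality for $V(R)$ and then close the argument by Gronwall. Fix $\eps>0$. By the hypothesis we can choose $R_0>0$ such that for every nonnegative smooth $\varphi$ supported in $M\setminus\overline{B(R_0)}$,
$$\int_M \varphi\,\Delta r\,dV \;\le\; \eps\int_M \varphi\,dV.$$
Applying Stokes' theorem formally on the annulus $B(R)\setminus B(R_0)$, and noting that $|\nabla r|=1$ almost everywhere and that $\nabla r$ is the outward unit normal to $\partial B(R)$ off the cut locus, one expects
$$A(R)-A(R_0) \;=\; \int_{B(R)\setminus B(R_0)}\Delta r \;\le\; \eps\bigl(V(R)-V(R_0)\bigr),$$
where $A(\rho):=\vol(\partial B(\rho))$.

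Since $r$ is only Lipschitz and $\Delta r$ exists merely as a distribution (locally integrable, by Lemma~\ref{wang}), I would make the Stokes step rigorous by replacing $r$ with the smooth approximation $\tilde r$ from Proposition~\ref{pp-1} for a choice of $\eta$ decaying fast enough at infinity: property~(d) supplies the pointwise upper bound on $\Delta \tilde r$, property~(b) lets one replace $|\nabla\tilde r|$ by $|\nabla r|=1$ in the two boundary integrals in the limit, and property~(a) keeps the $L^1$-contribution of the auxiliary function $b$ on $B(R)\setminus B(R_0)$ bounded by $\eta(R_0-1)$, independently of $R$, so that it can be absorbed into the $\eps$-term (possibly after enlarging $R_0$). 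Since $V$ is monotone and locally Lipschitz with $V'=A$ almost everywhere, one deduces $V'(R)\le A(R_0)+\eps\,V(R)$ for a.e.\ $R>R_0$, and Gronwall's inequality gives
$$V(R)\;\le\;\Bigl(V(R_0)+\tfrac{A(R_0)}{\eps}\Bigr)e^{\eps(R-R_0)},$$
which is the required bound with $C(\eps)=(V(R_0)+A(R_0)/\eps)\,e^{-\eps R_0}$.

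The main obstacle is the Stokes step itself: it mixes a distributional differential inequality with the non-smoothness of $r$ across the cut locus. Proposition~\ref{pp-1} is tailor-made for exactly this kind of approximation, and the monotonicity (and decay) of $\eta$ is what guarantees the $b$-error stays uniform in $R$. Once that step is in hand, the choice of $R_0$, passage to the differential inequality $V'\le A(R_0)+\eps V$, and the final Gronwall estimate are all routine.
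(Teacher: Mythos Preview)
Your proposal is correct and follows the same skeleton as the paper: turn the distributional bound $\Delta r\le m(r)$ into the inequality $A(R)-A(R_0)\le \eps\bigl(V(R)-V(R_0)\bigr)+C_\eps$ via Stokes on an annulus, then integrate the resulting first-order ODI for $V$. The only difference is that the paper applies Stokes' theorem directly to $\Delta r$ (legitimate since $\Delta r$ is locally integrable by Lemma~\ref{wang} and $r$ is Lipschitz with $|\nabla r|=1$ a.e.), whereas you route the argument through the smooth approximation $\tilde r$ of Proposition~\ref{pp-1}; that detour is not needed here, and in fact the paper reserves Proposition~\ref{pp-1} for the later $L^1$ estimates of $\Delta\tilde r$ in Lemma~\ref{DeltarVolEst}, where pointwise control of the smooth substitute really matters.
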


\begin{proof}
Let $m(r)$ be a nonnegative continuous function such that
\[
\lim_{r\to\infty} m(r)=0,
\]
and
\[
\Delta r\leq m(r)
\]
in the sense of distribution. It follows that
\[
\int_{B(R)\backslash B(1)}\Delta r\leq \int_{B(R)\backslash B(1)} m(r)
\]
which, by Stokes' Theorem, implies that
\[
{\rm vol}\,(\partial B(R))-{\rm vol}\,(\partial B(1))\leq \int_{B(R)\backslash B(1)} m(r).
\]

 Let $\eps>0$.  Then we can find an $R_\eps$ such that $m(r)<\eps$ for $r>R_\eps$.
Setting $f(R)=V(R)$, we obtain
 \[
 f'(R)\leq {\rm vol}\,(\partial B(1))+\int_{B(R_\eps)\backslash B(1)} m(r)    +\eps (f(R)-f(R_\eps))
 \]
 for any $R>R_\eps$.
 Thus
 \[
 (e^{-\eps R}(f(R)-f(R_\eps)))'\leq C_\eps e^{-\eps R}
 \]
 for $R>R_\eps$, where $C_\eps$ is a constant depending on $\eps$ and the manifold $M$. Integrating from $R_\eps$ to $R$, we obtain
 \[
 f(R)< f(R_\eps)+C_\eps\eps^{-1}e^{-\eps R_\eps}e^{\eps R}
 \]
 for $R>R_\eps$. Thus for any $R$, we have
 \[
 V(R)=f(R)<C(\eps) e^{\eps R}
 \]
 for
 \[
 C(\eps)=f(R_\eps)+C_\eps\eps^{-1}e^{-\eps R_\eps}.
 \]
\end{proof}

In other words, whenever the  Laplacian of the radial function $r(x)=d(x,p)$ is asymptotically nonnegative in the sense of distribution,  the manifold has subexponential volume growth with respect  to the point $p$.  In the case of finite volume for the manifold $M$, we will also need an assumption on the decay rate of the volume of a ball of radius $r$.  We say that the volume of $M$  {\it decays exponentially at $p$}, if there exists an $\eps_o>0$ such that
\[
\vol(M)-V(r)\leq  e^{-\eps_o r}
\]
for $r$ large. For the purposes of computing the $L^2$ essential spectrum, we will need that the volume does not decay exponentially.

\subsection{$L^1$ estimates for $\Delta \tilde{r}$.}

{We set $\tilde{r}$ to be the smoothing of $r$ from Proposition \ref{pp-1}}. The following lemma is a more precise version of ~\cite{Lu-Zhou_2011}*{Lemma 2}.

\begin{lem} \label{DeltarVolEst}
\noindent Let $r(x)$ be the radial function to a fixed point $p$ on $M$, and suppose that~\eqref{DeltarAsy} is valid in the sense of distribution.  Then we have the following two cases
\begin{enumerate}
\item [(a)]
Whenever  $\vol(M)$ is infinite,
 for any  $\eps>0$ and   $r_1>0$ {large enough},  there exists a $K=K(\eps, r_1)$ such that for any $r_2>K$,
we have
\begin{equation}\label{wsx-1}
\int_{B(r_2)\setminus B(r_1)} |\Delta \tilde{r}| \leq \eps \, V(r_2+1) ;
\end{equation}
\item [(b)]
Whenever  $\vol(M)$ is finite,  for any $\eps>0$ there exists a $K(\eps)>0$  such that for any $r_2>K$, we have
\[
\int_{M \setminus B(r_2)} |\Delta \tilde{r}| \leq  \eps \, (\vol(M)-V(r_2))+2\vol(\pa B(r_2)).
\]
\end{enumerate}
\end{lem}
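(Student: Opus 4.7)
The plan is to imitate Lemma~\ref{wang} via the split $|\Delta \tilde r| = 2(\Delta \tilde r)_{+} - \Delta \tilde r$, bounding the positive part pointwise by the asymptotic upper bound and handling the signed part by Stokes. Condition~\eqref{DeltarAsy} furnishes a positive continuous decreasing function $m(r)$ with $m(r)\to 0$ and $\Delta r \leq m(r)$ in the distributional sense. Since $m$ is decreasing and $r(y)\geq r(x)-1$ for $y\in B_x(1)$, we have $\max_{y\in B_x(1)} \Delta r(y) \leq m(r(x)-1)$, so Proposition~\ref{pp-1}(d) upgrades to the pointwise inequality
\[
\Delta \tilde r(x) \leq M(r(x)) + |b(x)|, \qquad M(r) := m(r-1) + \eta(r),
\]
where $M$ is continuous, decreasing and tends to zero. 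Hence $(\Delta \tilde r)_{+} \leq M(r) + |b|$, and
\[
|\Delta \tilde r| \leq 2M(r) + 2|b| - \Delta \tilde r,
\]
with the Stokes step controlled by $|\nabla \tilde r|\leq 2$ from Proposition~\ref{pp-1}(c).

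For part (a) I integrate this inequality over $B(r_2)\setminus B(r_1)$. Stokes and Proposition~\ref{pp-1}(a) yield
\[
\int_{B(r_2)\setminus B(r_1)} |\Delta \tilde r|
\leq 2\int_{B(r_2)\setminus B(r_1)} M(r) + 2\eta(r_1-1) + 2\vol(\partial B(r_2)) + 2\vol(\partial B(r_1)).
\]
Choosing $r_1$ so large that $M(r)<\eps/8$ on $r\geq r_1$ bounds the first term by $(\eps/4) V(r_2)$. The $\eta(r_1-1)$ and $\vol(\partial B(r_1))$ terms are $r_2$-independent constants, hence negligible compared with $V(r_2+1)$ once $K$ is chosen large enough. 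The real obstacle is the term $\vol(\partial B(r_2))$: I handle it by applying Stokes to the distributional inequality $\Delta r \leq m(r)$ on $B(r_2)\setminus B(1)$, obtaining $\vol(\partial B(r_2)) \leq \vol(\partial B(1)) + \int_{B(r_2)\setminus B(1)} m(r)\,dV$; splitting the last integral at the threshold where $m<\eps/8$ gives $\vol(\partial B(r_2)) \leq C_0 + (\eps/8) V(r_2) \leq (\eps/4) V(r_2+1)$ for $r_2$ large. Summing the pieces yields the claimed bound.

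For part (b) the same strategy is applied over $M\setminus B(r_2)$, with two adjustments. Since $M$ has no boundary at infinity, I apply Stokes on $B(r_3)\setminus B(r_2)$ and pass to the limit along a sequence $r_3\to\infty$ with $\vol(\partial B(r_3))\to 0$; such a sequence exists because $\int_0^\infty \vol(\partial B(s))\,ds = \vol(M)<\infty$. Monotone convergence then gives
\[
\int_{M\setminus B(r_2)} |\Delta\tilde r| \leq 2\int_{M\setminus B(r_2)} M(r) + 2\|b\|_{L^1(M\setminus B(r_2))} + 2\vol(\partial B(r_2)).
\]
Exploiting the freedom in Proposition~\ref{pp-1}, I choose $\eta$ at the outset so that $\eta(r)\leq \tfrac{\eps}{8}(\vol(M)-V(r+1))$; this is admissible because the right-hand side is a positive continuous decreasing function tending to zero. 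Then $\|b\|_{L^1(M\setminus B(r_2))}\leq \eta(r_2-1) \leq \tfrac{\eps}{8}(\vol(M)-V(r_2))$, and combining with the bound $M(r)<\eps/8$ valid for $r\geq r_2$ large yields the estimate of (b).
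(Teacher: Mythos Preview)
Your argument is correct and follows the same skeleton as the paper's proof: bound $|\Delta\tilde r|$ by twice an upper bound for its positive part minus $\Delta\tilde r$ via Proposition~\ref{pp-1}(d), then apply Stokes. The one substantive difference is how you control the outer boundary contribution at $\partial B(r_2)$ in part~(a). The paper exploits Proposition~\ref{pp-1}(b) --- the $L^1$ closeness of $\nabla\tilde r$ to $\nabla r$ --- together with a mean-value choice of a nearby radius $r'$ so that $\int_{\partial B(r')}\bigl|\tfrac{\partial\tilde r}{\partial n}-1\bigr|$ is small; this radius shift is the source of the ``$+1$'' in $V(r_2+1)$. You instead bound $\vol(\partial B(r_2))$ directly by applying Stokes to the distributional inequality $\Delta r\leq m(r)$, which is exactly the computation underlying Lemma~\ref{LemSubexp}. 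Your route is more self-contained, does not invoke Proposition~\ref{pp-1}(b), and yields the stated inequality for \emph{every} $r_2>K$ without a radius-shifting step; the paper's route keeps all the analysis on the smoothed function $\tilde r$ and ties the result more tightly to the approximation proposition. For part~(b) the two arguments are essentially identical (both choose $\eta$ depending on $\eps$ and pass to the limit at infinity).
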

\begin{proof} By Proposition ~\ref{pp-1} and using the idea in the proof of Lemma~\ref{wang}, we obtain
\[
|\Delta \tilde r(x)|\leq 2(\max_{y\in B_x(1)} \Delta r(y)+\eta(r(x))+|b(x)|)-\Delta\tilde r(x)
\]
in the sense of distribution. Using our assumptions on $\Delta r$ and $\eta$, we see that  for any $\eps>0$ we can find an $r_1>0$ large enough such that whenever $r(x)>r_1$, then
\[
2(\max_{y\in B_x(1)} \Delta r(y)+\eta(r(x))\,)<\eps/2
\]
{also in the sense of distribution.}
Therefore  for $r>r_1+2$,
\[
\int_{B(r)\setminus B(r_1)} |\Delta \tilde{r}|
\leq\frac\eps 2\, (V(r)-V(r_1))+2\int_{M\setminus B(r_1)}|b|-\int_{B(r)\setminus B(r_1)}\Delta\tilde r.
\]
Using Stokes' Theorem, we get
\[
\int_{B(r)\setminus B(r_1)}|\Delta \tilde{r}|\leq \frac\eps 2\,  (V(r)-V(r_1))+2\int_{M{\setminus B(r_1)}} |b|-\int_{\p B(r)}\frac{\p \tilde{r}}{\p n} + \int_{\p B(r_1)}\frac{\p \tilde{r}}{\p n},
\]
where $\p/\p n$ is the outward normal direction on the boundary. Obviously, {the above implies that}
\begin{align} \label{qaz-3}
\int_{B(r)\setminus B(r_1)}|\Delta \tilde{r}| \leq & \frac\eps 2\,  (V(r)-V(r_1))+2\int_{M{\setminus B(r_1)}}|b| \\
&+\int_{\p B(r)}\left|\frac{\p \tilde{r}}{\p n} -1\right|+ \int_{\p B(r_1)}\frac{\p \tilde{r}}{\p n}.\notag
\end{align}

We first consider the case when the volume of $M$ is infinite. By Proposition  ~\ref{pp-1}, choosing $r_1$ large enough we obtain
\[
\int_{M{\setminus B(r_1)}}|b|< \frac{\eps}{4}
\]
and
\begin{equation}\label{qaz-2}
\|\nabla \tilde r-\nabla r\|_{L^1(M\backslash B(r_1))}\leq 1
\end{equation}

Since the volume of $M$ is infinite, then there exists $K=K(\eps, r_1)>r_1+2$ such that whenever $r>K$
\begin{equation}\label{qaz-1}
\int_{B(r)\setminus B(r_1)}|\Delta \tilde{r}|\leq \frac{3\eps} 4\,( V(r){-V(r_1)\,})+\int_{\pa B(r)}\left|\frac{\pa\tilde r}{\pa n}-1\right|.
\end{equation}

We choose an $r'$ such that $|r'-r|<1$ and
\[
\int_{\pa B(r')}\left|\frac{\pa\tilde r}{\pa n}-1\right|\leq \int_{r-1}^{r+1}\int_{\pa B(t)}\left|\frac{\pa\tilde r}{\pa n}-1\right|dt.
\]
By~\eref{qaz-2}, we have
\[
\int_{\pa B(r')}\left|\frac{\pa\tilde r}{\pa n}-1\right|<2.
\]
Therefore,
\[
\int_{B(r')\setminus B(r_1)}|\Delta \tilde{r}|\leq \frac{3\eps}{4}\,  (V(r')-V(r_1))+2.
\]
Choosing a possibly larger $K(\eps,r_1)$ we get  {\it (a)}.

The proof of {\it (b)} is similar. We choose $\eta(r)$ decreasing to zero so fast so that

\[
\int_{M\backslash B(r_1)}|b|\leq \frac\eps 8(\vol (M)- V(r_1)).
\]
Since the volume of $M$ is finite, { sending $r\to\infty$ in  \eref{qaz-3}} we have
\[
\int_{M\setminus B_p(r_1)}|\Delta \tilde{r}|\leq \eps\,  (\vol(M)-V(r_1))+ \int_{\p B_p(r_1)}\frac{\p \tilde{r}}{\p n}.
\]
Since $|{\pa \tilde r}/{\pa n}|\leq 2$ by {\it (c)} of Proposition~\ref{pp-1},  the lemma follows.

\end{proof}

\begin{corT} \label{corlDeltar} Suppose that $(i), (ii), (iii)$ hold on $M$ as in Lemma \ref{DeltarEstim}. Then the same integral estimates for $\Delta \tilde{r}$ hold as in Lemma \ref{DeltarVolEst}.
\end{corT}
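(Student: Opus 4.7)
The plan is to prove this corollary by directly chaining the two preceding results. Under hypotheses (i), (ii), (iii), one has a continuous positive function $\delta(r)$ with $\lim_{r\to\infty}\delta(r)=0$ and ${\rm Ric}(\partial r,\partial r)\geq-(n-1)\delta(r)$ on $\Omega$. Lemma \ref{DeltarEstim} then asserts precisely that $\overline{\lim}_{r\to\infty}\,\Delta r\leq 0$ holds in the sense of distribution, which is exactly condition \eqref{DeltarAsy}. Since \eqref{DeltarAsy} is the only hypothesis (besides the fixed basepoint $p$) required by Lemma \ref{DeltarVolEst}, invoking it yields the stated integral estimates for $\Delta\tilde r$ in both the infinite-volume case (a) and the finite-volume case (b), where $\tilde r$ is the smoothing produced by Proposition \ref{pp-1}.

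The only point that merits checking is compatibility of the distributional formulations. The proof of Lemma \ref{DeltarVolEst} operates by choosing a continuous dominating function $m(r)\to 0$ with $\Delta r\leq m(r)$ distributionally; the proof of Lemma \ref{DeltarEstim} manufactures exactly such an $m(r)$ from the Bochner identity and the Riccati comparison \eqref{bochner}, together with Gromov's trick on the cut locus. Since both lemmas use the same convention for distributional inequalities of the Laplacian of a Lipschitz function, the two can be glued without any regularization beyond that already built into $\tilde r$. Consequently the proof is essentially a one-line citation, with no substantive obstacle; I would simply record that the corollary follows by combining Lemma \ref{DeltarEstim} with Lemma \ref{DeltarVolEst}.
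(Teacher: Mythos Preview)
Your proposal is correct and matches the paper's approach exactly: the corollary is stated in the paper without proof because it follows immediately by combining Lemma~\ref{DeltarEstim} (which shows that hypotheses (i)--(iii) imply \eqref{DeltarAsy}) with Lemma~\ref{DeltarVolEst} (whose only hypothesis is \eqref{DeltarAsy}). Your second paragraph on compatibility of the distributional formulations is a reasonable sanity check but is not needed for the argument.
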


\section{The $L^2$ Spectrum.}

 \label{sec5}

In this section,{we let $\tilde r(x)$ be the smoothing function defined in Proposition~\ref{pp-1}} of the radial function $r(x)=d(x,p)$.
For each $i\in\mathbb N$, let $x_i, y_i, R_i, \mu_i$ be large positive numbers such that $x_i>2R_i>2\mu_i+4$ and $y_i>x_i+2R_i$. We take the cut-off functions $\chi_i: \mathbb{R}^+\to \mathbb{R}^+$, smooth with support on $[x_i/R_i-1, y_i/R_i+1]$ and such that $\chi_i=1$ on $[x_i/R,y_i/R]$ and  $|\chi'_i|, |\chi''_i|$  bounded. Let $\lambda>0$ be a positive number. We let
\begin{equation}\label{4-equ}
 \phi_i(x)=\chi_i({\tilde{r}}/{R_i})\, e^{\sqrt{-1}\sqrt{\lambda}\,\tilde{r}}.
\end{equation}
Setting $\phi=\phi_i$, $R=R_i$, $x=x_i$ and $\chi=\chi_i$, we compute
\begin{align*}
\begin{split}&
\Delta\phi +\lambda\phi  =
(R^{-2}\chi''(\tilde r/R)+2i\sqrt\lambda R^{-1}\chi'(\tilde r/R))e^{\sqrt{-1}\sqrt\lambda\tilde r}|\nabla\tilde r|^2\\&
-\lambda\phi(|\nabla\tilde r|^2-1)+(R^{-1}\chi'(\tilde r/R)+i\sqrt{\lambda}\chi)e^{\sqrt{-1}\sqrt\lambda\tilde r}\Delta\tilde r.
\end{split}
\end{align*}

Then we have
\begin{equation} \label{2}
|\phi|\leq 1,\quad |\Delta\phi +\lambda\phi|\leq \frac{C}{R}+C|\Delta\tilde r| +C|\nabla\tilde r-\nabla r|,
\end{equation}
{where $C$ is a constant depending only on $\lambda$ and $M$.}

Denote the inner product on $L^2(M)$ by $( \cdot\,,\,\cdot)$. We have the following key estimates
\begin{lem}\label{lem41} Suppose that~\eqref{DeltarAsy} is valid for the radial function $r$ in the sense of distribution. In the case that the volume of $M$ is finite, we make the further assumption that its volume does not decay exponentially at $p$. Then there exist sequences of large numbers $x_i, y_i, R_i, \mu_i$ such that the supports of the $\phi_i$ are disjoint and
\[
\frac{\|(\Delta+\lambda)\phi_i\|_{L^1}}{(\phi_i,\phi_i)}\to 0
\]
as $i\to\infty$.
\end{lem}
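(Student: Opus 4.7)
The plan is to bound numerator and denominator in $\|(\Delta+\lambda)\phi_i\|_{L^1}/(\phi_i,\phi_i)$ by volume quantities and then to extract the parameter sequences from the subexponential growth of Lemma~\ref{LemSubexp} (infinite-volume case) or from the non-exponential-decay hypothesis (finite-volume case). Setting $A_i=\{x_i-R_i\le\tilde r\le y_i+R_i\}\supset\supp\phi_i$, the pointwise bound~\eqref{2} gives
\[
\|(\Delta+\lambda)\phi_i\|_{L^1}\le\frac{C}{R_i}\,\vol(A_i)+C\int_{A_i}|\Delta\tilde r|+C\int_{A_i}|\nabla\tilde r-\nabla r|,
\]
while $|\phi_i|=1$ on $\{x_i\le\tilde r\le y_i\}$ combined with Proposition~\ref{pp-1}~(c) yields $(\phi_i,\phi_i)\ge V(y_i-1)-V(x_i+1)$.

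In the infinite-volume case, a pigeonhole argument on $\log V$ is the engine: if $V(y+R)/V(y)\ge C>1$ held for every large $y$ and every $R\ge R_0$, iterating on $y+kR_0$ would force exponential growth of $V$, contradicting Lemma~\ref{LemSubexp}. Hence one can pick $R_i\to\infty$ and $y_i\to\infty$ with $V(y_i+R_i+1)\le 2\,V(y_i-1)$, and then take $x_i$ growing slowly so that $V(x_i+1)\le V(y_i-1)/2$, giving $\vol(A_i)\le V(y_i+R_i+1)\le 4(\phi_i,\phi_i)$. Lemma~\ref{DeltarVolEst}~(a), applied with $\eps_i\to 0$, then gives $\int_{A_i}|\Delta\tilde r|\le 4\eps_i(\phi_i,\phi_i)$, and choosing $\eta$ in Proposition~\ref{pp-1} to decay fast enough makes $\int_{A_i}|\nabla\tilde r-\nabla r|$ negligible compared to $(\phi_i,\phi_i)$. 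The ratio is thus dominated by $C/R_i+O(\eps_i)+o(1)\to 0$.

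For the finite-volume case, the same strategy is carried out with $W(r):=\vol(M)-V(r)$ in place of $V$. The assumption that $W$ does not decay exponentially translates, via the same iteration applied to $W(r+1)/W(r)$, into the existence of $x_i\to\infty$ with $W(x_i+1)/W(x_i)\to 1$; integrating $-(\log W)'=\vol(\pa B)/W$ then produces a (possibly re-labelled) sequence $x_i\to\infty$ with $\vol(\pa B(x_i))/W(x_i)\to 0$, since otherwise a positive lower bound on $-(\log W)'$ would force exponential decay. Choosing $y_i$ so large that $W(y_i-1)\le W(x_i+1)/2$ yields $(\phi_i,\phi_i)\gtrsim W(x_i+1)$, and Lemma~\ref{DeltarVolEst}~(b), applied with $R_i$ much smaller than $x_i$, shows that $\int_{A_i}|\Delta\tilde r|$ is $o((\phi_i,\phi_i))$; the other error terms are handled as before.

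Finally, the disjointness of the supports is enforced by choosing $x_{i+1}>y_i+R_i+1$ inductively, which is compatible with all previous choices since the parameters are only required to tend to infinity. \textbf{The main obstacle} is the extraction of the volume-ratio subsequences: this is the sole nontrivial use of the subexponential-growth and no-exponential-decay hypotheses, and the pigeonhole-on-logarithm step is what converts an aggregate growth (resp.\ decay) rate into pointwise control of ratios of volumes. Once the right parameters are in hand, the remaining estimates are direct applications of Proposition~\ref{pp-1} and Lemma~\ref{DeltarVolEst}.
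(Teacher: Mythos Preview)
Your infinite-volume argument is essentially the paper's: bound the numerator by~\eqref{2}, use Lemma~\ref{DeltarVolEst}~(a), and extract a subsequence $y_i$ with $V(y_i+R_i+1)\le 2V(y_i)$ via the pigeonhole/iteration contradiction with subexponential growth. That part is fine.

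The finite-volume case, however, has a genuine gap. From Lemma~\ref{DeltarVolEst}~(b) the numerator is controlled by
\[
\frac{C}{R_i}\,W(x_i-R_i-1)\;+\;C\eps_i\,W(x_i-R_i-1)\;+\;2C\,\vol(\pa B(x_i-R_i-1))\;+\;\text{(}\eta\text{-term)},
\]
while the denominator is only $\gtrsim W(x_i+1)$. To make the ratio go to zero you therefore need, \emph{simultaneously} and for $R_i\to\infty$, both
\[
\frac{W(x_i-R_i-1)}{W(x_i+1)}\quad\text{bounded}\qquad\text{and}\qquad \frac{\vol(\pa B(x_i-R_i-1))}{W(x_i+1)}\to 0.
\]
Your argument produces a sequence where $\vol(\pa B(z_i))/W(z_i)\to 0$, but this is at the \emph{inner} radius $z_i=x_i-R_i-1$; comparing to the denominator $W(x_i+1)=W(z_i+R_i+2)$ introduces the uncontrolled factor $W(z_i)/W(z_i+R_i+2)$, which may blow up as $R_i\to\infty$. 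The separate extraction of a sequence with $W(x_i+1)/W(x_i)\to 1$ only controls ratios at scale $1$, not at scale $R_i$. Nothing in your sketch couples these two subsequences.

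The paper resolves exactly this coupling: it asks for a single sequence $x_k$ satisfying
\[
\eps\, h(x_k-R)-2C\,h'(x_k-R)\;\le\;2\eps\, h(x_k),
\]
which encodes both requirements at once, and shows that if this fails for all large $x$ then the resulting differential inequality $-\bigl(e^{-\eps x}h(x-R)\bigr)'>2\eps\,h(x)e^{-\eps x}$ integrates and iterates to $h(x+(2k-1)R)\le(4/5)^k\,h(x-R)$, contradicting the no-exponential-decay hypothesis. You need either this combined inequality or some other device that selects the parameters jointly; treating the boundary term and the volume ratio independently does not close the argument.
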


\begin{proof}   {The   proof    is similar to that of ~\cite{Lu-Zhou_2011}.}
We define $x_i,y_i,R_i,\mu_i$ inductively. If $(x_{i-1}, y_{i-1}, R_{i-1}, \mu_{i-1})$ are defined, then we only need to let $\mu_i$ large enough so that the support of $\phi_i$ is disjoint with the previous $\phi_j$'s.
For simplicity  we suppress the $i$ in our notation. The upper bound estimates for $|\phi|$ and $|\Delta\phi+\lambda\phi|$ given in \eref{2}imply that
\begin{align} \label{1}
\begin{split}
\int_M(\phi, \Delta\phi+\lambda\phi)  \leq  & \frac CR \,[V(y+R) -V(x-R)] \\
& +  C \int_{B(y+R)\setminus B(x-R)}   |\Delta \tilde{r}| +\eta(x-R).
\end{split}
\end{align}
When the volume of $M$ is infinite, we choose a function $\eta$ as in Proposition \ref{pp-1} such that $\eta\leq 1$. By Lemma \ref{DeltarVolEst}, if we choose $R, x$ large enough
but fixed,   then for any $y>0$  large enough  we have
\[
\int_M(\phi, \Delta\phi+\lambda\phi) \leq  2 \eps  \,V(y+R+1).
\]
Since $\|\phi\|_2^2\geq V(y)-V(x)$, if we choose $y$ large enough, $\|\phi\|_2^2\geq \frac 12 V(y).$  The subexponential volume growth of $M$ at $p$ that was proved in Lemma \ref{LemSubexp} implies that there exists a sequence of $y_k\to \infty$ such that $V(y_k+R+1)\leq 2\, V(y_k)$. If not, then for a fixed number $y$ and for all $k\in \mathbb{N}$ we have that
\[
V(y+k(R+1))> 2^k\, V(y).
\]
However, by the subexponential volume growth of the manifold
\[
2^k\, V(y) < V(y+k(R+1))\leq C(\eps_1) \, e^{\eps_1 y} \, e^{k \,\eps_1 (R+1)}
\]
for any $\eps_1>0$  and $k$ large. This leads to a contradiction when we choose $\eps_1$ such that $\eps_1 R  <\log 2$.
Therefore, there exists a $y$  such that
\[
V(y+R +1)\leq 2\,V(y)\leq 4 \|\phi\|_2^2.
\]
Combing the above inequalities, we have
\[
\int_M(\phi, \Delta\phi+\lambda\phi) \leq   8 \eps \|\phi\|_2^2.
\]

We now consider the finite volume   case. Using equation \eref{1} and Lemma \ref{DeltarVolEst} we obtain for $x-R>K(\eps)$
\begin{align*}
\int_M(\phi, \Delta\phi+\lambda\phi) \leq & (R^{-1} +  \eps) \,[\vol(M) -V(x-R)]\\
& + 2 C \,\vol(\p B(x-R))
 +\eta(x-R).
\end{align*}
We set $h(r)=\vol(M) -V(r)$, a {decreasing} function. We choose $\eta(r)$  as in Proposition \ref{pp-1}
 so that $\eta(r)\leq \frac\eps 8 h(r)$. Making $\eps$ even smaller and choosing $R$ and $x-R$ large enough, we get\[
\int_M(\phi, \Delta\phi+\lambda\phi) \leq  \eps \,h(x-R) - 2 C \,h'(x-R).
\]
Given that $\|\phi\|_2^2\geq h(x)-h(y)$ and the volume of $M$ is finite, we can choose $y$ large enough so that \[
\|\phi\|_2^2\geq \frac 12 h(x).
\]

We would like to prove in this case that there exists a sequence of $x_k\to \infty $ such that
\[
\eps \,h(x_k-R) - 2C\,h'(x_k-R)\leq 2 \eps h(x_k).
\]
If the above inequality does not hold, then for all $x$ large enough
\[
\eps \,h(x-R) -2C \,h'(x-R) > 2 \eps h(x).
\]
Replacing $\eps$ by $\eps/2C$, we obtain
\[
\eps \,h(x-R) - \,h'(x-R) > 2 \eps h(x).
\]
This implies that
\[
- \bigl(e^{-\eps x}h(x-R)\bigr)' > 2 \eps   h(x) \, e^{-\eps x}.
\]
Integrating from $x$ to $x+R$ and using the monotonicity of $h$ we have
\[
h(x-R) >2(1 -e^{-\eps R}) h(x+R).
\]
Choosing $R$ even larger, we can make  $2 {(1-e^{-\eps R})}> 5/4$, therefore
\[
h(x-R)  >  \frac 54 h(x+R)
\]
for all $x$ large enough. By iterating this inequality, we get for all positive integers $k$
\[
h(x-R) > \left(\frac 54\right)^k\, h(x+(2k-1)R).
\]
Therefore
\[
\vol(M)-V(x-R) >   \left(\frac 54\right)^k [\vol(M)-V(x+(2k-1)R)\, ]
\]
which gives
\[
\vol(M)-V(x+(2k-1)R) \leq  \left(\frac 45\right)^k \, \vol(M).
\]
Sending $k\to \infty$ this contradicts the nonexponential decay assumption on the volume.
\end{proof}

Corollary \ref{corlDeltar} gives
\begin{corT} \label{corl41}
Suppose that $(i), (ii), (iii)$ hold on $M$ as in Lemma \ref{DeltarEstim}. In the case that the volume of $M$ is finite, we make the further assumption that its volume does not decay exponentially at $p$. Then there exist sequences of large numbers $x_i, y_i, R_i, \mu_i$
and cut-off functions $\chi_i$
such that the supports of the $\phi_i$ are disjoint and
\[
\frac{\|(\Delta+\lambda)\phi_i\|_{L^1}}{(\phi_i,\phi_i)}\to 0
\]
as $i\to\infty$.

\end{corT}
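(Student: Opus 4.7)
The plan is to treat Corollary~\ref{corl41} as a direct packaging of Lemma~\ref{lem41} under the pointwise radial Ricci curvature hypothesis, since all of the analytic machinery needed to run the proof of Lemma~\ref{lem41} has already been developed and only needs to be invoked in sequence.

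First I would observe that the hypotheses $(i), (ii), (iii)$ of Lemma~\ref{DeltarEstim} imply the distributional bound $\overline{\lim}_{r\to\infty}\Delta r\leq 0$ that appears as assumption~\eqref{DeltarAsy} in Lemma~\ref{lem41}; this is exactly what Lemma~\ref{DeltarEstim} proves via the Bochner--Riccati comparison, so no further curvature argument is required. Once~\eqref{DeltarAsy} is in force, Lemma~\ref{LemSubexp} yields the subexponential volume growth at $p$ that is needed in the infinite-volume case, and Corollary~\ref{corlDeltar} supplies the $L^1$ bounds on $\Delta\tilde r$ that are needed in both the infinite- and finite-volume cases.

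With these ingredients secured, I would rerun the proof of Lemma~\ref{lem41} verbatim: inductively select $x_i, y_i, R_i, \mu_i$ with $\mu_i$ large enough that $\supp\phi_i$ is disjoint from the previous $\phi_j$'s, form $\phi_i = \chi_i(\tilde r/R_i)\, e^{\sqrt{-1}\sqrt{\lambda}\tilde r}$ as in~\eqref{4-equ}, and use~\eqref{2} together with Corollary~\ref{corlDeltar} to bound $\int_M(\phi_i,\Delta\phi_i + \lambda\phi_i)$ by $\eps\, V(y_i+R_i+1)$ in the infinite-volume case and by $\eps\, h(x_i-R_i) - 2C\, h'(x_i-R_i)$ with $h(r)=\vol(M)-V(r)$ in the finite-volume case. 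In the former, subexponential growth produces $y_k$ with $V(y_k+R+1)\leq 2 V(y_k)$; in the latter, the non-exponential decay hypothesis produces the analogous $x_k$ via the same iteration argument used in Lemma~\ref{lem41}. Combined, these force $\|(\Delta+\lambda)\phi_i\|_{L^1}/(\phi_i,\phi_i)\to 0$.

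The only obstacle of any substance is the translation of the pointwise radial Ricci bound into the distributional Laplace bound on $r$, and this has already been carried out in Lemma~\ref{DeltarEstim}; consequently the corollary follows without additional analytic work, by chaining the previously established results.
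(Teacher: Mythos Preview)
Your proposal is correct and matches the paper's own argument: the paper simply states ``Corollary~\ref{corlDeltar} gives'' the result, which is exactly your chain of reductions---hypotheses $(i),(ii),(iii)$ yield~\eqref{DeltarAsy} via Lemma~\ref{DeltarEstim}, hence the $L^1$ estimates on $\Delta\tilde r$ via Corollary~\ref{corlDeltar}, and then the proof of Lemma~\ref{lem41} runs unchanged. Your write-up is more explicit than the paper's one-line pointer, but the logical content is identical.
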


Now  we prove Theorem~\ref{thm11}.
In fact we will be able to prove a more general, albeit more technical result

\begin{thm}\label{thm12}
Let $M$ be a complete noncompact Riemannian manifold. Suppose that, with respect to a fixed point $p$, the radial function $r(x)=d(x,p)$ satisfies
\begin{equation*}
\overline{\lim_{r\to \infty}}\; \Delta r\leq 0
\end{equation*}
in the sense of distribution, and if the volume of the manifold is finite, we additionally assume  that its volume  does not decay exponentially at $p$.  Then the $L^2$ spectrum of the Laplace operator on functions is $[0,\infty)$.
\end{thm}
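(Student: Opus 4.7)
The plan is to combine Lemma~\ref{lem41} with the $L^{1}$--$L^{\infty}$ essential--spectrum criterion of Theorem~\ref{thm00}, thereby showing that every $\lambda>0$ lies in $\sigma_{\textup{ess}}(-\Delta)$. Fix $\lambda>0$; since our hypotheses are precisely those of Lemma~\ref{lem41}, we obtain a sequence of smooth compactly supported functions $\phi_{i}$ of the form~\eqref{4-equ}, with pairwise disjoint supports whose radial locations march out to infinity (by the inductive choice of $\mu_{i}$), satisfying
\[
\sigma_{i}:=\frac{\|\Delta\phi_{i}+\lambda\phi_{i}\|_{L^{1}}}{\|\phi_{i}\|_{L^{2}}^{2}}\longrightarrow 0.
\]
From~\eqref{2} we have $|\phi_{i}|\le 1$ pointwise, so $\|\phi_{i}\|_{L^{\infty}}\le 1$ and consequently
\[
\|\phi_{i}\|_{L^{\infty}}\cdot\|\Delta\phi_{i}+\lambda\phi_{i}\|_{L^{1}}\le \sigma_{i}\,\|\phi_{i}\|_{L^{2}}^{2},
\]
which is exactly the hypothesis~\eqref{mnb} of Theorem~\ref{thm00} with parameter $\sigma_{i}$.

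\textbf{Applying the criterion.} Given any compact $K\subset M$ and any $\sigma_{0}>0$, the compactness and disjointness of the supports, together with $x_{i}\to\infty$, guarantee that for all sufficiently large $i$ we have $\mathrm{supp}(\phi_{i})\cap K=\emptyset$ and simultaneously $\sigma_{i}<\sigma_{0}$. The essential--spectrum clause of Theorem~\ref{thm00} then yields
\[
\sigma_{\textup{ess}}(-\Delta)\cap\bigl(\lambda-\eps(\sigma_{0}),\,\lambda+\eps(\sigma_{0})\bigr)\neq\emptyset,\qquad \eps(\sigma_{0})=\min\bigl(1,(\lambda+1)\sigma_{0}^{1/3}\bigr).
\]
Letting $\sigma_{0}\to 0^{+}$ gives $\eps(\sigma_{0})\to 0$, and since $\sigma_{\textup{ess}}(-\Delta)$ is closed we conclude $\lambda\in\sigma_{\textup{ess}}(-\Delta)$. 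This holds for every $\lambda>0$, and $0$ is then a limit point, so $[0,\infty)\subset\sigma_{\textup{ess}}(-\Delta)\subset\sigma(-\Delta)$. The reverse inclusion $\sigma(-\Delta)\subset[0,\infty)$ is immediate from the nonnegativity of $-\Delta$, yielding the stated equality.

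\textbf{Where the work is.} All the genuine analytic content is absorbed into Lemma~\ref{lem41}, which has already been established. The delicate balancing act there is to choose $x_{i},y_{i},R_{i}$ so that the cutoff contribution $\sim R^{-1}[V(y+R)-V(x-R)]$ together with the term $\int_{B(y+R)\setminus B(x-R)}|\Delta\tilde{r}|$ (controlled by Lemma~\ref{DeltarVolEst}) is eventually small relative to the mass $\|\phi_{i}\|_{L^{2}}^{2}\ge V(y)-V(x)$. A pigeonhole over scales---powered by the subexponential volume growth of Lemma~\ref{LemSubexp} in the infinite--volume case, or by the symmetric nonexponential--decay hypothesis applied to $\vol(M)-V(r)$ in the finite--volume case---supplies radii with $V(y+R+1)\le 2V(y)$ and forces $\sigma_{i}\to 0$. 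Beyond invoking these facts, the only new observation needed for the present theorem is the pointwise bound $|\phi_{i}|\le 1$, which is precisely what allows the $L^{1}$--$L^{\infty}$ criterion of Theorem~\ref{thm00} to replace the unavailable $L^{2}$ bound on $\Delta\tilde{r}$.
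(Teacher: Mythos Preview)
Your proof is correct and follows essentially the same route as the paper: invoke the test functions $\phi_i$ of~\eqref{4-equ}, use Lemma~\ref{lem41} to get $\|\Delta\phi_i+\lambda\phi_i\|_{L^1}/\|\phi_i\|_{L^2}^2\to 0$, combine with $\|\phi_i\|_{L^\infty}\le 1$ from~\eqref{2}, and feed this into Theorem~\ref{thm00}. Your write-up is in fact more explicit than the paper's one-line proof (which, incidentally, cites Corollary~\ref{corl41} where Lemma~\ref{lem41} is the reference matching the hypotheses of Theorem~\ref{thm12}); in particular, your closure argument for passing from $\sigma_0\to 0$ to $\lambda\in\sigma_{\textup{ess}}(-\Delta)$ and your handling of $\lambda=0$ as a limit point spell out details the paper leaves implicit.
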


\begin{proof}
Let $\phi_i$ be the sequence of functions as defined in ~\eqref{4-equ}.  Then by the construction of the functions
and Corollary~\ref{corl41}, the assumptions of Theorem~\ref{thm00} are satisfied. This completes the proof of the theorem.\end{proof}

\begin{remark} \label{rmkwarp} We note that a similar result should hold on warped product manifolds $M=\mathbb{R}\times_J \tilde{M}$ with metric $g=d\rho^2 + J^2(\rho,\theta) \, \tilde{g},$ where $(\tilde{M},\tilde{g})$ is a compact $(n-1)$-dimensional submanifold of $M$ and $\rho$ is the distance function from this submanifold.  Under the same asymptotically nonnegative assumption on ${\rm Ric}(\p \rho, \p \rho)$ as in Lemma~\ref{DeltarEstim}, we also get that the $L^2$ spectrum of the Laplace operator on functions is $[0,\infty)$.
\end{remark}

\section{Complete Shrinking Ricci Solitons}

A noncompact  complete Riemannian manifold  $M$ with metric $g$ is  called a gradient shrinking Ricci soliton if there exists a smooth function $f$ such that  the Ricci tensor of the metric $g$ is given by
\[
R_{ij}+\n_i \n_j f = \rho \, g_{ij}
\]
for some positive constant $\rho>0$. By rescaling the metric we may rewrite the soliton equation as
\[
R_{ij}+\n_i \n_j f = \frac 12 \, g_{ij}.
\]
The scalar curvature $R$ of a gradient shrinking Ricci soliton is nonnegative, and the volume growth of such manifolds (with respect to the Riemannian metric) is Euclidean.  Hamilton~\cite{Ham} proved that the scalar curvature of a gradient shrinking Ricci soliton satisfies the equations
\[
\n_i R  =  2 \, R_{ij}\,  \n_j f,
\]
\[
R+|\n f|^2-f=C_o
\]
for some constant $C_o$. We may add a constant to $f$ so that
\[
R+|\n f|^2-f=0.
\]

In ~\cite{Lu-Zhou_2011}, the authors proved that
\begin{enumerate}
\item the $L^1$ essential spectrum of the Laplacian contains $[0, \infty)$;
\item the $L^2$ essential spectrum of the Laplacian is $[0, \infty)$, if the scalar curvature has sub-quadratic growth.
\end{enumerate}
Using our new Weyl Criterion, we  are able to remove the curvature condition.

\begin{proof}[Proof of Theorem~\ref{ricci-solition}]
It can be shown that $f(x)\geq 0$ and the key idea is to use $\rho(x)=2\sqrt{f(x)}$ as an approximate distance function on the manifold, because of the special properties that it satisfies.

We define
\[
D(r)=\{x\in M : \rho(x)<r\}
\]
and set $V(r) =\vol(D(r))$.
\noindent For some positive number $y$ sufficiently large we consider the cut-off function $\chi: \mathbb{R}^+\to \mathbb{R}$, smooth with support in $[0, y +2]$ and such that $\chi=1$ on $[1,y+1]$ and  $|\chi'|, |\chi''|\leq C$. For any $\lambda > 0$  and large enough constants $b, l$ we let
\begin{equation*}
 \phi(\rho)=\chi\left(\frac{\rho-b}{l}\right)\, e^{\sqrt{-1}\sqrt{\lambda}\,\rho}
\end{equation*}
{which has support on $[b+l, b+l(y+1)]$}.  Lu and Zhou~\cite{Lu-Zhou_2011}*{page 3289} demonstrate that for sufficiently large $l$ and $b$
\[
\int_M |\Delta \phi +\lambda \phi| \leq \eps V(b+(y+2)l).
\]
At the same time
\[
\|\phi\|_{L^2}^2\geq V(b+(y+1)l)-V(b+l)
\]
(note that the same holds true for the $L^1$ norm of $\phi$). Arguing as is ~\cite{Lu-Zhou_2011}*{Theorem 6} we conclude that there exists a  $y$ large enough such that
\[
\int_M |\Delta \phi +\lambda \phi| \leq 4 \eps \|\phi\|_{L^2}^2.
\]
As in the previous section, we may also choose appropriate sequences of $b_i, l_i$ such that the supports of the $\psi_i$ are disjoint and  condition  {\it (2)} of Theorem~\ref{thm00} holds. Condition  {\it (1)} is verified by the estimate above and the fact that $\|\phi_i\|_{L^\infty}=1$.\end{proof}

\section{Exhaustion functions on complete manifolds}

From what we have seen  so far, it is apparent that two things are important when computing the essential spectrum of the Laplacian:
\begin{enumerate}
\item The control of the $L^1$ norm of $\Delta r$;
\item The control of the volume growth and decay of geodesic balls.
\end{enumerate}

The same idea can be used for manifolds whose essential spectrum is not the half real line.

In the spirit of the results above, we are also able to modify a theorem of Elworthy and Wang~\cite{ElwW04}. We now consider manifolds on which there exists a continuous exhaustion function $\gamma\in \mathcal C(M)$ such that

(a) $\gamma$ is unbounded above and is $\mathcal C^2$ smooth in the domain $\{\gamma>R\}$ for some $R>0$ and

(b) $\vol(\{m_o<\gamma<n\})<\infty $ for some $m_o$ and any $n>m_o$ where the volume is measured with respect to the Riemannian metric.

For $t>0$ and $c\in \mathbb{R}$ we define $B_t=\{\gamma(x)<t\}$ and set
 $dv_c=e^{-c\gamma}dv$. For $t\geq s$, let $U_c(s,t)=\vol_c(B_t\setminus B_s)$ where $\vol_c$ is the volume with respect to the measure $dv_c$.

We begin by stating the result of Elworthy and Wang for the sake of comparison.

\begin{thm}[\cite{ElwW04}*{Theorem 1.1}]
Suppose that there exists a function $\gamma \in \mathcal C(M)$ that satisfies $(a)$ and $(b)$ and a constant $c\in\mathbb{R}$ such that
\begin{equation} \label{exh2}
\lim_{s\to\infty} \overline{\lim_{t\to\infty}}\;  U_c(s,t)^{-1}\int_{B_t\setminus B_s} [(\Delta\gamma-c)^2+(|\n\gamma|^2-1)^2 ] \, dv_c=0
\end{equation}
and
\begin{equation} \label{exh1}
\lim_{t\to\infty} \max \{U_c(m_o,t), U_c(t,\infty)^{-1}  \} \,e^{-\eps t}=0 \qquad \rm{for \ any\ } \eps>0.
\end{equation}
Then $\sigma(-\Delta)\supset  [c^2/4,\infty).$ When the above hold for $c=0$, then $\sigma(-\Delta)= [0,\infty).$\\
\end{thm}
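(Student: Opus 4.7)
The plan is to verify the classical Weyl criterion (Theorem \ref{Thm.Weyl}) directly, by constructing approximate eigenfunctions adapted to $\gamma$. For $\lambda\geq c^2/4$, write $\mu=\sqrt{\lambda-c^2/4}\geq 0$ and set $\alpha=-c/2+i\mu$. The algebraic identity $\alpha^2+\alpha c+\lambda=0$ is arranged so that $e^{\alpha\gamma}$ is a formal eigenfunction of $-\Delta$ at eigenvalue $\lambda$ in the idealized regime $|\nabla\gamma|\equiv 1$, $\Delta\gamma\equiv c$, while the choice $\mathrm{Re}(\alpha)=-c/2$ ensures $|e^{\alpha\gamma}|^2=e^{-c\gamma}$, so that $L^2(dv)$ integrals of $|\phi|^2$ convert exactly into the weighted quantities $U_c$ appearing in the hypotheses.

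I would then take trial functions $\phi_n(x)=\chi_n(\gamma(x))\,e^{\alpha\gamma(x)}$, where $\chi_n$ is smooth, compactly supported in $[s_n,t_n]\subset(R,\infty)$, identically $1$ on $[s_n+1,t_n-1]$, with $|\chi_n'|,|\chi_n''|$ uniformly bounded. A direct computation yields
\begin{equation*}
(\Delta+\lambda)\phi_n = e^{\alpha\gamma}\Bigl\{(\chi_n''+2\alpha\chi_n')|\nabla\gamma|^2+\chi_n'\,\Delta\gamma+\alpha^2\chi_n(|\nabla\gamma|^2-1)+\alpha\chi_n(\Delta\gamma-c)\Bigr\}.
\end{equation*}
The first two terms are supported only in the boundary shells $\{s_n<\gamma<s_n+1\}$ and $\{t_n-1<\gamma<t_n\}$, while the last two are bulk errors of exactly the form controlled by \eqref{exh2}. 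Passing to $L^2(dv)$ norms via $|e^{\alpha\gamma}|^2\,dv=dv_c$ gives
\begin{equation*}
\|\phi_n\|_{L^2(dv)}^2\geq U_c(s_n+1,t_n-1),
\end{equation*}
together with an upper bound on $\|(\Delta+\lambda)\phi_n\|_{L^2(dv)}^2$ by $C\,[U_c(s_n,s_n+1)+U_c(t_n-1,t_n)]$ plus $C\int_{B_{t_n}\setminus B_{s_n}}\bigl[(|\nabla\gamma|^2-1)^2+(\Delta\gamma-c)^2\bigr]\,dv_c$, after expanding $|\nabla\gamma|^4=((|\nabla\gamma|^2-1)+1)^2$ to absorb the rough quartic factor against the boundary shells and the bulk integrand.

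Hypothesis \eqref{exh2} is designed precisely so that the bulk contribution, divided by $U_c(s_n+1,t_n-1)$, tends to $0$ for an appropriate diagonal choice $s_n,t_n\to\infty$. The remaining and main obstacle is to arrange simultaneously that the two boundary shells $U_c(s_n,s_n+1)$ and $U_c(t_n-1,t_n)$ are also of smaller order than $U_c(s_n+1,t_n-1)$. This is exactly what \eqref{exh1} provides: it forbids both exponential growth of $U_c(m_o,t)$ and exponential decay of $U_c(t,\infty)$, and a pigeonhole/subsequence argument in the spirit of Lemma~\ref{lem41} extracts a sequence $(s_n,t_n)$ along which one finds unit annuli inside $[s_n+1,t_n-1]$ whose volume is comparable to the whole middle piece, forcing the thin boundary shells to be negligible. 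The normalized $\psi_n=\phi_n/\|\phi_n\|_{L^2}$ then satisfy all conditions of Theorem~\ref{Thm.Weyl}, yielding $\lambda\in\sigma(-\Delta)$ and hence $[c^2/4,\infty)\subset\sigma(-\Delta)$. When $c=0$, the nonnegativity of $-\Delta$ forces the reverse inclusion and the equality $\sigma(-\Delta)=[0,\infty)$ follows.
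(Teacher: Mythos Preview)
This theorem is not proved in the paper; it is quoted from \cite{ElwW04} ``for the sake of comparison'' with the authors' own Theorem~\ref{72}, so there is no proof here to compare against directly.

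That said, your sketch is essentially the standard (and correct) argument, and it coincides structurally with the proof the paper supplies for its own variant, Theorem~\ref{72}: the same test functions $\chi(\gamma)\,e^{(-c/2+i\mu)\gamma}$, the same expansion of $(\Delta+\lambda)\phi$ (cf.\ \eqref{73}), and the same reduction to controlling the boundary shells via \eqref{74}. The only substantive difference is which Weyl criterion is invoked at the end. Since the Elworthy--Wang hypothesis \eqref{exh2} controls an $L^2$ quantity, the classical criterion (Theorem~\ref{Thm.Weyl}) that you use is enough; by contrast, Theorem~\ref{72} weakens \eqref{exh2} to the $L^1$ condition \eqref{exh3}, which is precisely why the authors must appeal to their generalized criteria (Corollary~\ref{cor-Thm.Weyl.bis-2} for $c=0$, and Corollary~\ref{cor-Thm.Weyl.bis-3} with the additional heat-kernel bound \eqref{exh5} for $c\neq 0$). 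One minor point: the $(\chi')\Delta\gamma$ term, when squared, produces a factor $(\Delta\gamma)^2$ on the boundary shells; this is handled by writing $\Delta\gamma=(\Delta\gamma-c)+c$ just as you did for $|\nabla\gamma|^4$, but you did not say so explicitly.
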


Note that condition \eref{exh1} implies that when $c = 0$ the volume of the manifold grows and decays subexponentially, as was the case for us in the previous sections. The assumption here is that the weighted volume  grows and decays subexponentially.

Our result is as follows:

\begin{thm}\label{72}
Suppose that there exists a function $\gamma \in \mathcal C(M)$ that satisfies $(a)$ and $(b)$ and a constant $c\in\mathbb{R}$ such that
\begin{equation} \label{exh3}
\lim_{s\to\infty} \overline{\lim_{t\to\infty}}\; \,U_{c}(s,t)^{-1} \,\int_{B_t\setminus B_s} (|\Delta\gamma-c|+|\,|\n\gamma|^2-1|) \, dv_{c}=0
\end{equation}
and
\begin{equation}\label{exh4}
\lim_{t\to\infty} \max \{U_c(m_o,t), U_c(t,\infty)^{-1}  \} \,e^{-\eps t}=0 \qquad \rm{for \ any\ } \eps>0.
\end{equation}
If \eref{exh3} and \eref{exh4} hold for $c=0$, then  $\sigma(-\Delta)=[0,\infty).$

In the case they hold for $c\neq 0$, we make the additional assumptions that the heat kernel of the Laplacian satisfies the pointwise bound
\begin{equation}\label{exh5}
p_t(x,y)\leq C t^{-m}\,e^{-\frac{(\gamma(x)-\gamma(y))^2}{4C_1 t} -\frac{d(x,y)^2}{4C_2 t } +\beta_1|\gamma(x)-\gamma(y)|+\beta_2d(x,y)+\beta_3 t}
\end{equation}
for some positive constants $m, C_1, C_2, \beta_1, \beta_2, \beta_3$, and that the Ricci curvature of the manifold is bounded below ${\rm Ric}(M)\geq -(n-1)K$ for a nonnegative number $K$. Then $\sigma(-\Delta) \supset [c^2/4,\infty).$
\end{thm}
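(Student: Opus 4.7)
The plan is to mirror the strategy of Section~\ref{sec5}, constructing approximate eigenfunctions from the exhaustion function $\gamma$ (in place of the radial function $r$) and then verifying the hypotheses of Theorem~\ref{thm00} or, in the weighted case, of a suitable weighted analog. For fixed $\lambda\geq c^{2}/4$, set $\mu=\sqrt{\lambda-c^{2}/4}\geq 0$ and define
\[
\phi_{i}(x) = \chi_{i}\!\left(\frac{\gamma(x)-b_{i}}{l_{i}}\right)e^{(-c/2+i\mu)\gamma(x)},
\]
where $\chi_{i}$ is a cutoff function of the type used in Lemma~\ref{lem41} and the parameters $b_{i},l_{i},y_{i}$ are chosen inductively so that the supports are pairwise disjoint and contained in the region $\{\gamma>R\}$ where $\gamma$ is $\mathcal{C}^{2}$. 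A direct computation combined with the algebraic cancellation $(-c/2+i\mu)^{2}+c(-c/2+i\mu)+\lambda=0$ yields
\[
(\Delta+\lambda)\phi_{i} = e^{(-c/2+i\mu)\gamma}\Big[A\bigl(|\nabla\gamma|^{2}-1\bigr)+B\bigl(\Delta\gamma-c\bigr)\Big]\chi_{i} + R_{i},
\]
where $A,B$ depend only on $c,\mu$, and $R_{i}$ collects cutoff errors of order $l_{i}^{-1}|\chi_{i}'|+l_{i}^{-2}|\chi_{i}''|$ multiplied by $e^{-c\gamma/2}$.

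In the case $c=0$ one has $|\phi_{i}|=\chi_{i}$, so $\|\phi_{i}\|_{L^{\infty}}\leq 1$ and $\|\phi_{i}\|_{L^{2}}^{2}=\int\chi_{i}^{2}\,dv_{0}$. The $L^{1}$ norm of $(\Delta+\lambda)\phi_{i}$ is then controlled by the integrand of \eqref{exh3} plus cutoff contributions. Arguing exactly as in Lemma~\ref{lem41}, one picks $s_{i},l_{i},y_{i},b_{i}\to\infty$ so that \eqref{exh3} drives the main term to zero while the subexponential growth-and-decay property \eqref{exh4} defeats the cutoff contributions, giving
\[
\frac{\|\phi_{i}\|_{L^{\infty}}\cdot\|(\Delta+\lambda)\phi_{i}\|_{L^{1}}}{\|\phi_{i}\|_{L^{2}}^{2}}\longrightarrow 0.
\]
Theorem~\ref{thm00} together with the disjointness of supports then places every $\lambda\geq 0$ into $\sigma_{\mathrm{ess}}(-\Delta)$, yielding $\sigma(-\Delta)=[0,\infty)$.

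The main obstacle is the case $c\neq 0$. Now the factor $e^{-c\gamma/2}$ forces $\|\phi_{i}\|_{L^{\infty}}$ to grow exponentially along the support of $\phi_{i}$, while the natural $L^{2}$ mass of $\phi_{i}$ is measured by $\int\chi_{i}^{2}\,dv_{c}$; the unweighted $L^{\infty}\cdot L^{1}$ inequality of Theorem~\ref{thm00} no longer closes. The role of the additional hypotheses is precisely to upgrade Lemma~\ref{lem42} to a \emph{weighted} resolvent estimate of the form
\[
\bigl\|(-\Delta+1)^{-1}f\bigr\|_{L^{\infty}_{-c/2}}\leq C\,\|f\|_{L^{\infty}_{-c/2}},\qquad L^{\infty}_{-c/2}:=\{f:\,e^{c\gamma/2}f\in L^{\infty}\}.
\]
Such an estimate should follow from a Schur test applied to the kernel of $(-\Delta+1)^{-1}=\int_{0}^{\infty}e^{-t}p_{t}(x,y)\,dt$: the Gaussian factor $e^{-(\gamma(x)-\gamma(y))^{2}/(4C_{1}t)}$ in \eqref{exh5} absorbs the exponential prefactor $e^{c(\gamma(y)-\gamma(x))/2}$, while the Ricci lower bound provides the polynomial volume control on geodesic balls needed to integrate out the $d(x,y)$-Gaussian. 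Substituting this weighted resolvent bound into the proof of Theorem~\ref{thm00} produces a weighted criterion of the form
\[
\|\phi\|_{L^{\infty}_{-c/2}}\cdot\|(\Delta+\lambda)\phi\|_{L^{1}_{c/2}}\leq\sigma\,\|\phi\|_{L^{2}}^{2},
\]
where $L^{1}_{c/2}$ is the dual weighted space with measure $e^{-c\gamma/2}dv$. Our $\phi_{i}$ satisfy $\|\phi_{i}\|_{L^{\infty}_{-c/2}}=1$, and the pointwise formula above converts the $L^{1}_{c/2}$ norm of $(\Delta+\lambda)\phi_{i}$ to an integral against $dv_{c}$ of precisely the quantities appearing in \eqref{exh3}, plus cutoff errors that are tamed by \eqref{exh4}. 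Balancing $b_{i},l_{i},y_{i}$ as in Lemma~\ref{lem41} then completes the argument, and the technically hardest step is the Schur estimate leading to the weighted resolvent bound, which is the point at which both \eqref{exh5} and the Ricci lower bound enter essentially.
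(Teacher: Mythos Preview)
Your treatment of the case $c=0$ is essentially the paper's: the same test functions, the same pointwise bound on $(\Delta+\lambda)\phi$, the same use of \eqref{exh3} and \eqref{exh4} to feed Theorem~\ref{thm00}.

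For $c\neq 0$ your diagnosis is correct---one needs a weighted replacement for Lemma~\ref{lem42}---and your plan is close in spirit to the paper's, which also proves a Schur-type bound
\[
\int_{B_{t+1}\setminus B_{s-1}} g_\alpha^{i}(x,y)\,e^{-c\gamma(y)/2}\,dy \leq C\,e^{-c\gamma(x)/2}
\]
and uses it exactly as you describe. The difference is \emph{which} operator the Schur test is run on. You propose $(-\Delta+1)^{-1}$; the paper instead invokes Corollary~\ref{cor-Thm.Weyl.bis-3} and works with $(-\Delta+\alpha)^{-N}$ for $N>m$ and $\alpha$ large. This is not cosmetic: with only \eqref{exh5} as input, the kernel of $(-\Delta+1)^{-1}$ is bounded above by $\int_0^\infty t^{-m}e^{-(1-\beta_3-\cdots)t}(\cdots)\,dt$, and two things go wrong. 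First, the factor $t^{-m}$ is not integrable near $t=0$ once $m\geq 1$, so the upper bound you would feed into the Schur test is infinite on the diagonal; one cannot recover local integrability from \eqref{exh5} alone. Second, after completing the square in the Gaussians (to absorb $e^{c(\gamma(y)-\gamma(x))/2}$ and to beat the Bishop--Gromov exponential volume growth) one picks up an extra $e^{(C_1\beta_4^2+C_2\beta_5^2)t}$, and there is no reason $1$ dominates $\beta_3+C_1\beta_4^2+C_2\beta_5^2$, so the $t\to\infty$ tail need not converge either.

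The paper's choice of $(-\Delta+\alpha)^{-N}$ with $N>m$ fixes both issues simultaneously: the kernel is $\int_0^\infty t^{\,N-1}e^{-\alpha t}p_t\,dt$, so $t^{N-1-m}$ is integrable at $0$, and $\alpha$ can be taken large enough to swallow all the exponential-in-$t$ terms. Once this Schur bound is in hand, the paper verifies conditions \textit{(2)} and \textit{(3)} of Corollary~\ref{cor-Thm.Weyl.bis-3} directly, rather than formulating a separate ``weighted Theorem~\ref{thm00}''. Your outline becomes correct if you replace $(-\Delta+1)^{-1}$ by $(-\Delta+\alpha)^{-N}$, $(-\Delta+\alpha)^{-N-1}$ with $N>m$ and $\alpha$ sufficiently large, and appeal to Corollary~\ref{cor-Thm.Weyl.bis-3} instead of a weighted version of Theorem~\ref{thm00}.
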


In the case $c=0$, the main difference between our result and Theorem 1.1 of~\cite{ElwW04} is that we only need to control the $L^1$ norms  of $|\Delta\gamma-c|$ and  $| |\n\gamma|^2-1|$ as in \eref{exh3}, instead of their $L^2$ norms (compare to \eref{exh2}).  Our assumption is weaker in various cases, for example when $\gamma$ is the radial function where we know that its Laplacian is not locally $L^2$ integrable when the manifold has a cut-locus, but it is locally $L^1$ integrable.

In the case $c\neq 0$, the additional assumption \eref{exh5} is similar to requiring a uniform Gaussian bound for the heat kernel, but now with respect to the $\gamma$ function as well. Such a bound is certainly true in the case of hyperbolic space with $\gamma$ the radial function.

The proof uses similar estimates to those of Elworthy and Wang for the measures of annuli along the exhaustion function $\gamma$. We provide an outline of the argument with the necessary  modifications.

\begin{proof}
Set $\lambda\geq c^2/4$ be a fixed number. For any $t>s$ we let $\chi: \mathbb{R}^+\to \mathbb{R}^+$, be a smooth cut-off function with support on $[s-1, t+1]$ and such that $\chi=1$ on $[s,t]$ and  $|\chi'|, |\chi''|$  bounded. Let $\lambda_c=\sqrt{\lambda-c^2/4}$ and define for $s\geq 0$
\[
f(s)= e^{(i{\lambda_c} - c/2)\,s}.
\]
Consider the test function
\[
 \phi_{s,t}(x)=\chi(\gamma(x))\, f(\gamma(x)).
\]
We compute
\[
\Delta\phi_{s,t} +\lambda\phi_{s,t}  = (\chi''f+2\chi'f'+\chi f'')|\nabla\gamma|^2
+ (\chi' f + \chi f')\Delta\gamma +\lambda \,\chi f.
\]
Using the fact that $f'' +c f' +\lambda f=0$ we obtain
\[
\Delta\phi_{s,t} +\lambda\phi_{s,t}  = (\chi''f+2\chi'f')|\nabla \gamma|^2
+ (\chi' f )\Delta\gamma + \chi f'(\Delta \gamma -c |\n \gamma|^2) +  \lambda \,\chi f(1-|\n \gamma|^2).
\]
Therefore there exists a constant $C$ such that
\begin{equation}\label{73}
|\Delta\phi_{s,t} +\lambda\phi_{s,t}|\leq
Ce^{-c/2\gamma}\, {\bigl[}(|\Delta\gamma-c|+|\,|\n\gamma|^2-1|) 1_{\text{spt}(B_{{t+1}}\setminus B_{{s-1}})} +1_{\text{spt}(\chi')} {\bigr]}.
\end{equation}

For the rest of the estimates, we will repeatedly use
\begin{equation}\label{74}
\lim_{s,t\to\infty} (U_c(s-1,s)+U_c(t,t+1))/U_c(s,t)=0,
\end{equation}
which follows from ~\eqref{exh4}.

Using~\eqref{73}, we have
\begin{align}\label{75}
|(\phi_{s,t},\Delta\phi_{s,t} +\lambda\phi_{s,t})| \leq & C \int_{B_{{t+1}}\backslash B_{{s-1}}}(|\Delta\gamma-c|+|\,|\n\gamma|^2-1|)\,dv_c \\
&{+ C (U_c(s-1,s)+U_c(t,t+1))} .\notag
\end{align}
We observe that
\begin{align*}
\frac{1}{U_c(s,t)}\int_{B_{t+1}\backslash B_{s-1}}&(|\Delta\gamma-c|+|\,|\n\gamma|^2-1|)\,dv_c \\
=&\bigl[1+\frac{U_c(s-1,s)+U_c(t,t+1)}{U_c(s,t)}\bigr] \\
&\cdot\,\frac{1}{U_c(s-1,t+1)}\int_{B_{t+1}\backslash B_{s-1}}(|\Delta\gamma-c|+|\,|\n\gamma|^2-1|)\,dv_c,
\end{align*}
which tends to zero as $s,t\to\infty$ by ~\eqref{74} and assumption \eref{exh3}. Since $\|\phi_{s,t}\|^2_{L^2}\geq U_c(s,t)$, inequality \eref{75}, the above estimate and ~\eqref{74} imply that
\begin{equation} \label{76}
\lim_{s,t\to\infty} |(\phi_{s,t}, \Delta\phi_{s,t} +\lambda\phi_{s,t})|/\|\phi_{s,t}\|^2_{L^2}=0.
\end{equation}

When $c=0$, we  choose  appropriate sequences of $s_n, t_n \to \infty$ such that condition {\it (2)} of Theorem~\ref{thm00} holds. Condition {\it (1)} of the Corollary  follows from \eref{76} and the fact that the functions $\phi_{s_n,t_n}$ are bounded. Therefore, $\lambda_0=\sqrt{\lambda}$ belongs to the essential $L^2$ spectrum. Given that $\lambda$ is any nonnegative number, the result follows.

 In the case $c\neq 0$, we will apply Corollary \ref{cor-Thm.Weyl.bis-3}.  For a fixed natural number $i>m$ and $\alpha>0$ we have that the integral kernel of $(-\Delta+\alpha)^{-i}$, $g_{\alpha}^{i}(x,y)$, is given by
\[
g_{\alpha}^{i}(x,y)= C(n)\int_0^\infty p_t(x,y)\, t^{i-1} \, e^{-\alpha t} \, dt.
\]
On the other hand, it is a property of the exponential function that for any $\beta_4, \beta_5 \in \mathbb{R}$
\[
e^{-\frac{(\gamma(x)-\gamma(y))^2}{4C_1 t}} \leq e^{-\beta_4|\gamma(x)-\gamma(y)|} e^{C_1\,\beta_4^2 t}
\]
and
\[
e^{-\frac{d(x,y)^2}{4C_2 t}} \leq e^{-\beta_5 d(x,y)} e^{C_2\,\beta_5^2 t}.
\]
Combining the above, we have that for any $N>m$ and $\beta_4, \beta_5>0$ there exists an $\alpha>0$ large enough, and a constant $C$ such that
\[
g_{\alpha}^{i}(x,y)\leq C\,e^{-\beta_4 \, |\gamma(x)-\gamma(y)|-\beta_5 d(x,y)}
\]
for $i=N, N+1$.
As a result, for any $t>s>2$
\begin{align*}
\int _{B_{t+1}\setminus B_{s-1}} g_{\alpha}^{i}(x,y) e^{-c/2\gamma(y)} dy&\leq C \int _{B_{t+1}\setminus B_{s-1}} e^{-\beta_4 |\gamma(x)-\gamma(y)| -\beta_5 d(x,y)} \,  e^{-c/2\gamma(y)} dy \\
&\leq C\,e^{-c/2\gamma(x)}
\end{align*}
after choosing $\beta_4=|c|/2$ and $\beta_5>\sqrt{K}$. This estimate together with \eref{74} also give
\begin{align*}
|(\,(-\Delta+\alpha)^{-i}\phi_{s,t},\Delta\phi_{s,t} +\lambda\phi_{s,t})| \leq &  C \int_{B_{{t+1}}\backslash B_{{s-1}}}(|\Delta\gamma-c|+|\,|\n\gamma|^2-1|)\,dv_c \\
& + C (U_c(s-1,s)+U_c(t,t+1)) .
\end{align*}

As a result,
\begin{align} \label{77}
\lim_{s,t\to\infty} |(\,(-\Delta+\alpha)^{-i}\phi_{s,t},\Delta\phi_{s,t} +\lambda\phi_{s,t})|/\|\phi_{s,t}\|^2_{L^2}=0.
\end{align}
Choosing  appropriate sequences of $s_n, t_n \to \infty$ and setting $\psi_n = \phi_{s_n,t_n}/\|\phi_{s_n,t_n}\|_{L^2}$, conditions {\it (1)} and {\it (4)} of Corollary \ref{cor-Thm.Weyl.bis-3} hold for the functions $\psi_n$. That {\it (2)} and {\it (3)} also hold follows from \eref{76} and \eref{77} respectively.

\end{proof}

\section{The use of continuous test functions.}
In this section we will see that it is not necessary to use cut-off functions in our test functions. We will do that by first proving  yet another version of the generalized Weyl's criterion (Corollary \ref{cor-Thm.Weyl.bis-8}).  This version of Weyl's Criterion  sometimes provides a cleaner method for computing the essential spectrum.

Let $D$ be a bounded domain of $M$ with smooth boundary. We use the notation $\mathcal C_0^\infty(D)$ to denote the set of smooth functions on the closure $\bar D$ which vanish on the boundary $\pa D$.
Let $\rho:D\to \mathbb R$ be the distance function to the boundary $\pa D$.

\begin{Def} \label{def51}
We define $\mathcal C_0^+( D)$ to be the set of functions $f$ on $ D$ with the properties
\begin{enumerate}
\item[$(1)$] $f$ is continuous, vanishing on $\pa D$;
\item[$(2)$] $f$ is Lipschitz, $\nabla f$ is essentially bounded, and $|\Delta f|$ exists in the sense of distribution;
\item[$(3)$] As $\eps\to 0,$ $\int_{\{\rho \leq \eps\}}|f|\leq \frac 12\eps^2(\int_{\pa D} |\nabla f|+o(1))$, and
$\int_{\{\rho \leq \eps\}}|\nabla f|\leq \eps(\int_{\pa D} |\nabla f|+o(1))$.
\end{enumerate}
\end{Def}

Let $\mathcal C_0^+(M)$ be the set of continuous functions whose support is a bounded domain of $M$ with smooth boundary and
\[
f\in \mathcal C_0^+({\rm supp}{f}).
\]

We have the following

\begin{corT}\label{cor-Thm.Weyl.bis-8}
A nonnegative real number  $\lambda$ belongs to the  spectrum $\sigma(-\Delta)$,  if
there exists a sequence $\{\psi_n\}_{n \in \Nat}$ of functions in $\mathcal C_0^+(M)$
such that
\begin{enumerate}
\item
${\displaystyle \frac{\|\psi_n\|_{L^\infty(D_n)}\cdot(\|(-\Delta-\lambda)\psi_n\|_{L^1( D_n)}+\|\nabla\psi_n\|_{L^1(\pa D_n)})}{\|\psi_n\|_{L^2(D_n)}^2} \to  0, \text{ as } n\to\infty,} $
\end{enumerate}
where $D_n= {\rm supp}\,{\psi_n}$.
Moreover, $\lambda$ belongs to $\sigma_{\rm ess}(-\Delta)$ of $\Delta$, if
\begin{enumerate}
\setcounter{enumi}{1}
\item  For any compact subset $K$  of $M$, there exists an $n$  such that the support of $\psi_n$ is outside $K$.
\end{enumerate}
\end{corT}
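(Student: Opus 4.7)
The plan is to reduce the statement to Theorem~\ref{thm00} by smoothly truncating each $\psi = \psi_n$ near the boundary of $D = D_n := \mathrm{supp}\,\psi_n$ and then letting the truncation degenerate. Let $\rho$ denote the distance to $\partial D$ in $D$. For each small $\varepsilon > 0$, choose a cut-off $\phi_\varepsilon \in \mathcal{C}^\infty(M)$ with $\phi_\varepsilon = 0$ on $\{\rho \leq \varepsilon\}$, $\phi_\varepsilon = 1$ on $\{\rho \geq 2\varepsilon\}$, $|\nabla\phi_\varepsilon| \leq C\varepsilon^{-1}$ and $|\Delta\phi_\varepsilon| \leq C\varepsilon^{-2}$ (these exist because $\partial D$ is smooth, so $\rho$ is smooth in a tubular neighborhood). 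Set $u_\varepsilon := \psi\phi_\varepsilon$; after a standard mollification if necessary, $u_\varepsilon$ lies in the domain of $\Delta$ on $L^2(M)$.

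Expanding via the product rule,
\begin{equation*}
\Delta u_\varepsilon + \lambda u_\varepsilon = \phi_\varepsilon(\Delta\psi + \lambda\psi) + 2\nabla\phi_\varepsilon\cdot\nabla\psi + \psi\,\Delta\phi_\varepsilon.
\end{equation*}
The first summand converges in $L^1$ to $\Delta\psi + \lambda\psi$ as $\varepsilon \to 0$. The other two are supported on the collar $\{\varepsilon \leq \rho \leq 2\varepsilon\}$, and Definition~\ref{def51}$(3)$ provides exactly the estimates needed to handle them:
\begin{equation*}
\|2\nabla\phi_\varepsilon\cdot\nabla\psi\|_{L^1} \leq \frac{2C}{\varepsilon}\int_{\{\rho\leq 2\varepsilon\}}|\nabla\psi| \leq 4C\Big(\int_{\partial D}|\nabla\psi|+o(1)\Big),
\end{equation*}
\begin{equation*}
\|\psi\,\Delta\phi_\varepsilon\|_{L^1} \leq \frac{C}{\varepsilon^2}\int_{\{\rho\leq 2\varepsilon\}}|\psi| \leq 2C\Big(\int_{\partial D}|\nabla\psi|+o(1)\Big).
\end{equation*}
Combined with the trivial bounds $\|u_\varepsilon\|_{L^\infty} \leq \|\psi\|_{L^\infty}$ and $\|u_\varepsilon\|_{L^2}^2 \to \|\psi\|_{L^2}^2$ (dominated convergence, since $\psi$ is continuous and compactly supported), one obtains
\begin{equation*}
\limsup_{\varepsilon\to 0}\frac{\|u_\varepsilon\|_{L^\infty}\,\|\Delta u_\varepsilon + \lambda u_\varepsilon\|_{L^1}}{\|u_\varepsilon\|_{L^2}^2} \leq C'\,\frac{\|\psi\|_{L^\infty}\bigl(\|(-\Delta-\lambda)\psi\|_{L^1(D)} + \|\nabla\psi\|_{L^1(\partial D)}\bigr)}{\|\psi\|_{L^2(D)}^2}.
\end{equation*}

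Applying this to $\psi = \psi_n$ and choosing $\varepsilon_n$ sufficiently small produces a sequence $u_n := \psi_n\phi_{\varepsilon_n}$ in the domain of $\Delta$ for which the quantity $\sigma_n := \|u_n\|_{L^\infty}\|(-\Delta-\lambda)u_n\|_{L^1}/\|u_n\|_{L^2}^2$ tends to zero, by hypothesis $(1)$. Theorem~\ref{thm00} then gives $\sigma(-\Delta) \cap (\lambda - \epsilon_n, \lambda + \epsilon_n) \neq \emptyset$ with $\epsilon_n = \min(1,(\lambda+1)\sigma_n^{1/3}) \to 0$, and closedness of the spectrum yields $\lambda \in \sigma(-\Delta)$. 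For the essential spectrum statement, hypothesis $(2)$ ensures the supports of the $\psi_n$ (hence of the $u_n$) can be arranged to escape every compact set, so the second half of Theorem~\ref{thm00} delivers $\lambda \in \sigma_\mathrm{ess}(-\Delta)$.

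The main technical obstacle is justifying the collar integrals under the weak regularity of $\psi$: because $\psi$ is merely Lipschitz and $\Delta\psi$ is a distribution, one cannot recover the boundary term $\int_{\partial D}|\nabla\psi|$ by a direct integration by parts. Conditions $(2)$ and $(3)$ of Definition~\ref{def51} are designed precisely so that the singular integrals $\int_{\{\rho\leq 2\varepsilon\}}|\psi|$ and $\int_{\{\rho\leq 2\varepsilon\}}|\nabla\psi|$ scale correctly in $\varepsilon$ to cancel the blowup of $|\Delta\phi_\varepsilon|$ and $|\nabla\phi_\varepsilon|$, and this cancellation is the heart of the argument.
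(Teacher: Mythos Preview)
Your argument is essentially the paper's own: cut off $\psi$ near $\partial D$ by composing with $\chi(\rho/\delta)$, expand via the product rule, control the two collar terms using precisely the scaling estimates of Definition~\ref{def51}(3), mollify the result to land in the domain of $\Delta$, and then feed everything into Theorem~\ref{thm00}. The only difference is organizational---the paper states the approximation as a separate Proposition and spells out the mollification step via a partition of unity and local convolution (to ensure the $L^1$ norm of the Laplacian is preserved up to a constant), whereas you fold it into the proof and call the mollification ``standard''; since $\Delta u_\varepsilon$ is a priori only a distribution, that step deserves the explicit treatment the paper gives it, but the strategy is identical.
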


The above corollary can be proved using the following approximation result

\begin{prop} Let $f\in \mathcal C_0^+(M)$. Then for any $\eps>0$, there exists a smooth function $h$ of $M$ such that
\begin{enumerate}
\item[(a)] ${\rm supp}\,(h)\subset {\rm supp}\, (f)$;
\item[(b)] $\|f-h\|_{L^1}+\|f-h\|_{L^2}\leq \eps$;
\item[(c)] $\|(-\Delta -\lambda) h\|_{L^1}\leq C (\|(-\Delta-\lambda) f\|_{L^1( D)}+\|\nabla f\|_{L^1(\pa D)})$,
\end{enumerate}
where $C$ is a constant independent of  $f$,  and $D={\rm supp}\,(f)$.
\end{prop}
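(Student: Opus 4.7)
The plan is to construct $h$ in two stages. First, I would multiply $f$ by a smooth cut-off that annihilates it in a thin tubular neighborhood of $\pa D$, producing a Lipschitz function $f_\delta$ whose support sits strictly inside $D$; then I would smooth $f_\delta$ by mollification to obtain a $C^\infty$ function $h$.

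For the cut-off stage, fix once and for all a smooth $\chi\colon[0,\infty)\to[0,1]$ with $\chi\equiv 0$ on $[0,1]$, $\chi\equiv 1$ on $[2,\infty)$, and $\|\chi'\|_\infty,\|\chi''\|_\infty$ universal. Set $\chi_\delta(x)=\chi(\rho(x)/\delta)$ and $f_\delta=\chi_\delta f$. The product rule gives, in the sense of distributions,
\begin{equation*}
(-\Delta-\lambda)f_\delta=\chi_\delta(-\Delta-\lambda)f-2\,\n\chi_\delta\cdot\n f-f\,\Delta\chi_\delta,
\end{equation*}
and the last two terms are supported on the annulus $\{\delta\le\rho\le 2\delta\}$. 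Their $L^1$ norms are dominated by $C\delta^{-1}\int_{\{\rho\le 2\delta\}}|\n f|$ and $C\delta^{-2}\int_{\{\rho\le 2\delta\}}|f|$ respectively (the auxiliary $\delta^{-1}|\Delta\rho|$ contribution is harmless since $\Delta\rho$ is bounded near the smooth boundary $\pa D$). Condition $(3)$ of Definition~\ref{def51} is precisely the statement that converts these $\delta^{-1}$ and $\delta^{-2}$ blow-ups into $C(\|\n f\|_{L^1(\pa D)}+o_\delta(1))$. This yields
\begin{equation*}
\|(-\Delta-\lambda)f_\delta\|_{L^1}\le\|(-\Delta-\lambda)f\|_{L^1(D)}+C\bigl(\|\n f\|_{L^1(\pa D)}+o_\delta(1)\bigr),
\end{equation*}
and condition $(3)$ together with $f\in L^\infty$ gives $\|f-f_\delta\|_{L^1}+\|f-f_\delta\|_{L^2}\to 0$ as $\delta\to 0$.

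For the smoothing stage, I would convolve $f_\delta$ at a scale $\sigma\ll\delta/4$: either in local coordinate charts covering a tubular neighborhood of the compact set $\supp f_\delta$, glued by a partition of unity, or, more cleanly, by setting $h=e^{s\Delta_\Omega}f_\delta$ where $\Delta_\Omega$ is the Dirichlet Laplacian of a smooth relatively compact domain $\Omega$ with $\supp f_\delta\subset\Omega$ and $\overline{\Omega}\subset D$. Either way $\supp h\subset D$, giving (a); mollification is contractive on $L^p$ and the errors $\|h-f_\delta\|_{L^1}+\|h-f_\delta\|_{L^2}$ can be made as small as desired by shrinking $\sigma$ (respectively $s$), giving (b); and because the smoothing commutes with $\Delta$ up to a commutator of size $o_\sigma(1)$ in $L^1$, the $L^1$ estimate from the first stage passes to $h$, delivering (c) with a constant $C$ that depends only on the fixed cut-off $\chi$, not on $f$.

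The principal obstacle is the smoothing stage: on a curved manifold local convolution does not commute exactly with $\Delta$, so one must either invoke a Friedrichs-type commutator lemma to force the commutator error to $0$ as $\sigma\to 0$, or adopt the heat-semigroup construction, in which case $\Delta h=e^{s\Delta_\Omega}\Delta f_\delta$ holds exactly and the $L^1$-contractivity of the semigroup on its Dirichlet domain trivializes the transfer of the Laplacian bound from $f_\delta$ to $h$.
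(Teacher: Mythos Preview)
Your two-stage architecture is exactly the paper's: cut off near $\pa D$ using $\chi(\rho/\delta)$, then mollify. Your first stage is line-for-line identical to theirs, including the use of condition~(3) of Definition~\ref{def51} to absorb the $\delta^{-1}$ and $\delta^{-2}$ factors. For the second stage the paper takes the partition-of-unity mollification route (mirroring their earlier Proposition~\ref{pp-1}): they cover $D$ by charts $U_i$, mollify $g_\delta$ in Euclidean coordinates on each $U_i$, and control the cross terms $\sum_i \Delta\psi_i(g_{i,\eps_i}-g_i)$ and $\sum_i \nabla\psi_i\cdot(\nabla g_{i,\eps_i}-\nabla g_i)$ directly by making $\eps_i$ small. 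This is essentially the Friedrichs commutator argument you allude to, carried out by hand.

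Your ``cleaner'' heat-semigroup alternative, however, does not work as stated. For $s>0$ the function $e^{s\Delta_\Omega}f_\delta$ is smooth in $\Omega$ and vanishes on $\pa\Omega$, but it is strictly positive throughout $\Omega$ whenever $f_\delta\ge 0$ is nontrivial, and by the Hopf boundary lemma its outward normal derivative on $\pa\Omega$ is nonzero. Hence the zero extension to $M$ fails to be $C^1$ across $\pa\Omega$, so $h$ is not a smooth function on $M$. Worse, the distributional Laplacian of this extension picks up a surface-measure term on $\pa\Omega$, so $(-\Delta-\lambda)h$ is not even in $L^1$ and the identity $\Delta h=e^{s\Delta_\Omega}\Delta f_\delta$ is false on $M$. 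If you want a semigroup-style smoothing you would have to use the whole-manifold heat flow $e^{s\Delta}$, which destroys the support condition~(a); there is no shortcut around the local mollification here.
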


\begin{proof} Let $\chi(t)$ be a cut-off function such that it vanishes in a neighborhood of $0$ and is $1$ for $t\geq 1$. Let $\delta>0$ be a small number. Consider
\[
g_\delta(x)=\chi\left(\frac{\rho(x)}{\delta}\right) f(x).
\]
It is not difficult to prove {\it (a), (b)} in the Proposition when we replace $h$ by $g_\delta$. To prove {\it (c)} we compute
\[
(-\Delta-\lambda) g_\delta=\chi(-\Delta-\lambda) f-2\delta^{-1}\chi'\nabla \rho\nabla f -(\delta^{-2}\chi''+\delta^{-1}\chi'\Delta\rho) f.
\]
Since $\pa D$ is smooth, $\rho$ is a smooth function near $\pa D$. Therefore  by (3) of Definition~\ref{def51} we have
\[
\|(-\Delta-\lambda) g_\delta\|_{L^1}\leq C (\|(-\Delta-\lambda) f\|_{L^1( D)}+\|\nabla f\|_{L^1(\pa D)})
\]
for $\delta$ sufficiently small.

The proof that $g_\delta$ can be approximated by a smooth  {function}  is similar to that
of Proposition~\ref{pp-1}. We sketch the proof here.

Let $D=\cup U_i$ be a finite cover of $D$. {Without loss of generality, we assume that those $U_i$'s which intersect with $\pa D$  are outside the support of $g_\delta$.}
Let ${\bf x_i}=(x_i^1,\cdots,x_i^n)$ be the local coordinates of $U_i$. Define $g_i=g_\delta|_{U_i}$.

Let $\xi(\bf x)$ be a non-negative smooth function of $\R^n$ whose support is within the unit ball. Assume that
 \[
 \int_{\mathbb R^n}\xi=1.
 \]
 Without loss of generality, we assume that each $U_i$ is an open subset of the unit ball of $\mathbb R^n$ with coordinates ${\bf x_i}$. Then for any $\eps>0$,
  \[
 g_{i,\eps}=\frac{1}{\eps^n}\int_{\mathbb R^n}\xi\left(\frac{{\bf x_i}-{\bf y_i}}{\eps}\right)g_i({\bf y_i}) d{\bf y_i}
 \]
 is a smooth function on $U_i$ and hence on $M$. Let $\{\sigma_i\}$ be a sequence of positive numbers
 such that
 \begin{equation}
 \sum_i\sigma_i(|\Delta\psi_i(x)|+4|\nabla\psi_i (x)|+\psi_i(x))
 \end{equation}
is sufficiently small.
 By ~\cite{gt}*{Lemma 7.1, 7.2}, for each $i$, we can choose $\eps_i<1$ small enough so that
 \begin{align}\label{oio-4}
 \begin{split}
 &|g_{i,\eps_i}(x)-g_i(x)|\leq\sigma_i;\\
 &\|\nabla g_{i,\eps_i}-\nabla g_i\|_{L^1{(U_i)}}\leq\sigma_i.
 \end{split}
 \end{align}
We also have
\begin{equation}\label{oio-6}
 \|\Delta g_{i,\eps_i}\|_{L^1}\leq  \|\Delta g_i\|_{L^1}.
  \end{equation}

Define
 \[
 h=\sum_i\psi_i g_{i,\eps_i}, \quad b=2 { \sum_i}  \nabla\psi_i\cdot \nabla g_{i,\eps_i}.
 \]
Since $\sum_i (\nabla\psi_i\cdot \nabla g_i)=(\sum_i\nabla \psi_i) \cdot \nabla g_\delta=0$ {almost everywhere on $D$},
we have  \[
b=2\sum_i \nabla\psi_i\cdot (\nabla g_{i,\eps_i}-\nabla g_i).
\]

We compute
\[
\Delta h=\sum_i[(\Delta\psi_i)\, g_{i,\eps_i}+2\nabla\psi_i\nabla g_{i,\eps_i}+\psi_i\Delta g_{i,\eps_i}],
\]
and
since
\[
\sum_i(\Delta\psi_i ) g_i=\sum_i (\Delta\psi_i) g_\delta=0,
\]
we have
\[
\Delta h=\sum_i[\Delta\psi_i(g_{i,\eps_i}- g_i)+ 2\sum_i\nabla\psi_i\cdot(\nabla g_{i,\eps_i}-\nabla g_i)+\psi_i\Delta g_{i,\eps_i}].
\]
By~\eqref{oio-4}, ~\eqref{oio-6}, we may choose $\eps_i$ to be sufficiently small so that
\[
\|{(-\Delta-\lambda)} h\|_{L^1{(D)}}\leq 2 \|(-\Delta-\lambda) g_\delta\|_{L^1(D)}.
\]

\end{proof}

\begin{bibdiv}
\begin{biblist}

\bib{char-lu-2}{unpublished}{
author={Charalambous, Nelia},
author={Lu, Zhiqin},
title={The essential spectrum of the Laplacian},
note={in preparation},}

\bib{CharJDE}{article}{
   author={Charalambous, Nelia},
   title={On the equivalence of heat kernel estimates and logarithmic
   Sobolev inequalities for the Hodge Laplacian},
   journal={J. Differential Equations},
   volume={233},
   date={2007},
   number={1},
   pages={291--312},
   issn={0022-0396},
}

\bib{CGT}{article}{
   author={Cheeger, Jeff},
   author={Gromov, Mikhail},
   author={Taylor, Michael},
   title={Finite propagation speed, kernel estimates for functions of the
   Laplace operator, and the geometry of complete Riemannian manifolds},
   journal={J. Differential Geom.},
   volume={17},
   date={1982},
   number={1},
   pages={15--53},
   issn={0022-040X},
   review={\MR{658471 (84b:58109)}},
}

\bib{C-L}{article}{
   author={Chen, Zhi Hua},
   author={Lu, Zhi Qin},
   title={Essential spectrum of complete Riemannian manifolds},
   journal={Sci. China Ser. A},
   volume={35},
   date={1992},
   number={3},
   pages={276--282},
   issn={1001-6511},
   review={\MR{1183713 (93k:58221)}},
}

\bib{Davies}{book}{
   author={Davies, E. B.},
   title={Heat kernels and spectral theory},
   series={Cambridge Tracts in Mathematics},
   volume={92},
   publisher={Cambridge University Press},
   place={Cambridge},
   date={1989},
   pages={x+197},
}

\bib{DST}{article}{
   author={Davies, E. B.},
   author={Simon, Barry},
   author={Taylor, Michael},
   title={$L^p$ spectral theory of Kleinian groups},
   journal={J. Funct. Anal.},
   volume={78},
   date={1988},
   number={1},
   pages={116--136},
   issn={0022-1236},
}

\bib{DDi}{article}{
   author={Dermenjian, Yves},
   author={Durand, Marc},
   author={Iftimie, Viorel},
   title={Spectral analysis of an acoustic multistratified perturbed
   cylinder},
   journal={Comm. Partial Differential Equations},
   volume={23},
   date={1998},
   number={1-2},
   pages={141--169},
   issn={0360-5302},
   review={\MR{1608508 (99a:35035)}},
}

\bib{Don2}{article}{
   author={Donnelly, Harold},
   title={On the essential spectrum of a complete Riemannian manifold},
   journal={Topology},
   volume={20},
   date={1981},
   number={1},
   pages={1--14},
   issn={0040-9383},
   review={\MR{592568 (81j:58081)}},
   doi={10.1016/0040-9383(81)90012-4},
}

	\bib{donnelly-1}{article}{
   author={Donnelly, Harold},
   title={Exhaustion functions and the spectrum of Riemannian manifolds},
   journal={Indiana Univ. Math. J.},
   volume={46},
   date={1997},
   number={2},
   pages={505--527},
   issn={0022-2518},
   review={\MR{1481601 (99b:58230)}},
   doi={10.1512/iumj.1997.46.1338},
}

\bib{Don}{article}{
   author={Donnelly, Harold},
   title={Spectrum of the Laplacian on asymptotically Euclidean spaces},
   journal={Michigan Math. J.},
   volume={46},
   date={1999},
   number={1},
   pages={101--111},
   issn={0026-2285},
}

\bib{ElwW04}{article}{
   author={Elworthy, K. D.},
   author={Wang, Feng-Yu},
   title={Essential spectrum on Riemannian manifolds},
   conference={
      title={Recent developments in stochastic analysis and related topics},
   },
   book={
      publisher={World Sci. Publ., Hackensack, NJ},
   },
   date={2004},
   pages={151--165},
}

\bib{Esc86}{article}{
   author={Escobar, Jos{\'e} F.},
   title={On the spectrum of the Laplacian on complete Riemannian manifolds},
   journal={Comm. Partial Differential Equations},
   volume={11},
   date={1986},
   number={1},
   pages={63--85},
   issn={0360-5302},
}

\bib{EF92}{article}{
   author={Escobar, Jos{\'e} F.},
   author={Freire, Alexandre},
   title={The spectrum of the Laplacian of manifolds of positive curvature},
   journal={Duke Math. J.},
   volume={65},
   date={1992},
   number={1},
   pages={1--21},
   issn={0012-7094},
}

\bib{gt}{book}{
   author={Gilbarg, David},
   author={Trudinger, Neil S.},
   title={Elliptic partial differential equations of second order},
   series={Classics in Mathematics},
   note={Reprint of the 1998 edition},
   publisher={Springer-Verlag},
   place={Berlin},
   date={2001},
   pages={xiv+517},
}

\bib{Ham}{article}{
   author={Hamilton, Richard S.},
   title={The formation of singularities in the Ricci flow},
   conference={
      title={Surveys in differential geometry, Vol.\ II},
      address={Cambridge, MA},
      date={1993},
   },
   book={
      publisher={Int. Press, Cambridge, MA},
   },
   date={1995},
   pages={7--136},
}

\bib{jli}{article}{
   author={Li, Jia Yu},
   title={Spectrum of the Laplacian on a complete Riemannian manifold with
   nonnegative Ricci curvature which possess a pole},
   journal={J. Math. Soc. Japan},
   volume={46},
   date={1994},
   number={2},
   pages={213--216},
   issn={0025-5645},
   review={\MR{1264938 (95g:58248)}},
   doi={10.2969/jmsj/04620213},
}

\bib{SchoenYau_bk}{book}{
   author={Schoen, R.},
   author={Yau, S.-T.},
   title={Lectures on differential geometry},
   series={Conference Proceedings and Lecture Notes in Geometry and
   Topology, I},
   note={Lecture notes prepared by Wei Yue Ding, Kung Ching Chang [Gong Qing
   Zhang], Jia Qing Zhong and Yi Chao Xu;
   Translated from the Chinese by Ding and S. Y. Cheng;
   Preface translated from the Chinese by Kaising Tso},
   publisher={International Press},
   place={Cambridge, MA},
   date={1994},
   pages={v+235},
   isbn={1-57146-012-8},
}

\bib{LiWang}{article}{
   author={Li, Peter},
   author={Wang, Jiaping},
   title={Complete manifolds with positive spectrum},
   journal={J. Differential Geom.},
   volume={58},
   date={2001},
   number={3},
   pages={501--534},
}

\bib{moi}{article}{
   author={Krej{\v{c}}i{\v{r}}{\'{\i}}k, David},
   title={Twisting versus bending in quantum waveguides},
   conference={
      title={Analysis on graphs and its applications},
   },
   book={
      series={Proc. Sympos. Pure Math.},
      volume={77},
      publisher={Amer. Math. Soc.},
      place={Providence, RI},
   },
   date={2008},
   pages={617--637},
   review={\MR{2459893 (2010g:81112)}},
   note={See arXiv:0712.3371v2 [math-ph] (2009) for a corrected version},
}

\bib{KK}{article}{
   author={D.~Krej\v{c}i\v{r}\'{\i}k and J.~K\v{r}\'{\i}\v{z}},
   title={On the spectrum of curved quantum waveguides},
   journal={Publ.~RIMS, Kyoto University},
   year={2005},
   volume={41},
   pages={757--791},
   ,}

\bib{Lu-Zhou_2011}{article}{
   author={Lu, Zhiqin},
   author={Zhou, Detang},
   title={On the essential spectrum of complete non-compact manifolds},
   journal={J. Funct. Anal.},
   volume={260},
   date={2011},
   number={11},
   pages={3283--3298},
   issn={0022-1236},
   review={\MR{2776570 (2012e:58058)}},
   doi={10.1016/j.jfa.2010.10.010},
}

\bib{rs4}{book}{
   author={Reed, Michael},
   author={Simon, Barry},
   title={Methods of modern mathematical physics. IV. Analysis of operators},
   publisher={Academic Press [Harcourt Brace Jovanovich Publishers]},
   place={New York},
   date={1978},
   pages={xv+396},
   isbn={0-12-585004-2},
   review={\MR{0493421 (58 \#12429c)}},
}

\bib{sturm}{article}{
   author={Sturm, Karl-Theodor},
   title={On the $L^p$-spectrum of uniformly elliptic operators on
   Riemannian manifolds},
   journal={J. Funct. Anal.},
   volume={118},
   date={1993},
   number={2},
   pages={442--453},
   issn={0022-1236},
   review={\MR{1250269 (94m:58227)}},
   doi={10.1006/jfan.1993.1150},
}
	
\bib{Wang97}{article}{
   author={Wang, Jiaping},
   title={The spectrum of the Laplacian on a manifold of nonnegative Ricci
   curvature},
   journal={Math. Res. Lett.},
   volume={4},
   date={1997},
   number={4},
   pages={473--479},
}

\bib{Z}{article}{
   author={Zhou, De Tang},
   title={Essential spectrum of the Laplacian on manifolds of nonnegative
   curvature},
   journal={Internat. Math. Res. Notices},
   date={1994},
   number={5},
   pages={209 ff., approx.\ 6 pp.\ (electronic)},
   issn={1073-7928},
   review={\MR{1270134 (95g:58250)}},
   doi={10.1155/S1073792894000231},
}

\end{biblist}
\end{bibdiv}


\end{document}